\documentclass[final,1p,times]{elsarticle}
\usepackage[T1]{fontenc}
\usepackage[utf8]{inputenc}
\usepackage[english]{babel}
\usepackage{booktabs}
\usepackage{amssymb}
\usepackage{amsmath}
\usepackage{amsthm}
\usepackage{mathtools}
\usepackage{mathrsfs}
\usepackage{float}
\usepackage{graphicx}
\usepackage{url}
\usepackage[colorlinks=true,linkcolor=blue,citecolor=blue,urlcolor=blue]{hyperref}
\usepackage{geometry}
\usepackage{setspace}
\usepackage{microtype}
\usepackage{enumitem}
\usepackage{algorithm}
\usepackage{algpseudocode}
\journal{arXiv}
\numberwithin{equation}{section}
\newtheorem{theorem}{Theorem}[section]

\newtheorem{corollary}[theorem]{Corollary}

\newtheorem{proposition}[theorem]{Proposition}
\newtheorem{assumption}[theorem]{Assumption}
\newtheorem{remark}[theorem]{Remark}

\allowdisplaybreaks
\providecommand{\qed}{\hfill\ensuremath{\square}}

\newcommand{\R}{\mathbb{R}}
\newcommand{\C}{\mathbb{C}}

\newcommand{\he}{^{\mathsf{H}}}
\newcommand{\tr}{^{\mathsf{T}}}
\newcommand{\Pe}{\mathrm{Pe}}
\newcommand{\Ah}{\widehat{A}}
\newcommand{\Kh}{\widehat{K}}
\newcommand{\Gh}{\widehat{G}}
\newcommand{\lmin}{\lambda_{\min}}
\newcommand{\lmax}{\lambda_{\max}}
\newcommand{\cond}{\kappa}
\newcommand{\norm}[1]{\left\lVert #1\right\rVert}
\newcommand{\ip}[2]{\left\langle #1,\,#2\right\rangle}
\newcommand{\Schur}[2]{#1\,\slash\,#2}
\DeclareMathOperator{\rang}{rank}
\DeclareMathOperator{\dist}{dist}
\DeclareMathOperator{\diag}{diag}
\DeclareMathOperator{\Div}{div}
\DeclareMathOperator{\Real}{Re}

\begin{document}
	
	\begin{frontmatter}
		
		\title{Double screening in the training dynamics of variational
			physics-informed neural networks for heterogeneous coupled parabolic
			systems}
		
		\author[1]{Ali Ouattara Kouma}
		\ead{benzeus96@gmail.com}
		
		\author[1]{Gossrin Jean-Marc Bomisso}
		\ead{bogojm@yahoo.com}
		
		\address[1]{Universit\'e Nangui Abrogoua, UFR Sciences Fondamentales et
			Appliqu\'ees, 02 BP 801 Abidjan 02, C\^ote d'Ivoire}
		
		\begin{abstract}
			We analyze the training dynamics of variational physics-informed neural
			networks applied to linear coupled parabolic convection--diffusion--reaction
			systems, called heterogeneous when only a subset of the components undergoes
			convective transport. In the neural tangent kernel regime, the gradient flow
			on the space-time variational residuals reduces to a linear differential
			system whose operator is a Gram matrix built from the space-time symbol of
			the system operator and the matrix tangent kernel. The main result is a
			double screening theorem. Under dominant convection, the Schur complement of
			this matrix relative to the block of convective components converges to an
			expression that involves only the diffusive block of the symbol, with no
			coupling term, together with the Schur complement of the tangent kernel.
			From this we derive four consequences, namely an exact identity quantifying
			the screened coupling energy, a degradation law for the training rate
			governed by the canonical correlations of the kernel, a bound on the
			condition number in terms of the P\'eclet number, and the non-participation
			of temporal frequencies in the screening mechanism. The growth of the
			condition number slows down shared-step gradient descent, whereas the
			continuous flow suffers no slowdown, which makes the training difficulty
			attributable to the optimizer rather than to the approximation. We finally
			show that the Adam optimizer, through its adaptive scaling, mitigates this
			difficulty provided that the architecture separates the parameters
			associated with the convective and diffusive components, confirming the
			architectural prescription that follows from the second screening. The
			predictions are validated numerically down to machine precision, on a
			two-dimensional exchanger, and by the full training of finite-width
			networks.
		\end{abstract}
		
		\begin{keyword}
			variational physics-informed neural networks \sep neural tangent kernel
			\sep coupled parabolic systems \sep dominant convection \sep
			Schur complement \sep condition number
			\MSC[2020] 65M12 \sep 65K10 \sep 68T07 \sep 35K40 \sep 35K57.
		\end{keyword}
		
	\end{frontmatter}
	
	\section{Introduction}
	\label{sec:intro}
	
	Physics-informed neural networks \cite{raissi2019} have become a widespread
	method for approximating the solutions of partial differential equations. In
	their variational formulation \cite{kharazmi2019,kharazmi2021}, the solution
	is represented by a network whose parameters are tuned by minimizing, over a
	family of test functions, a loss built from the weak form of the residual.
	Despite their empirical success, these methods face training difficulties
	whose origin remains only partially understood, in particular when the
	equation exhibits multiple scales or dominant convection.
	
	Two lines of analysis have developed. The first concerns approximation and
	provides a priori error bounds and generalization estimates
	\cite{berrone2022,hu2023,shin2020,mishra2022,deryck2022}. The second concerns
	training dynamics and relies on the theory of the neural tangent kernel
	\cite{jacot2018}, which describes the evolution of the network under gradient
	flow in the infinite-width limit. This theory has been applied to
	physics-informed networks to explain the emergence of a spectral bias and the
	difficulty of representing high frequencies
	\cite{wang2022,wang2021,krishnapriyan2021}. These works, however, address
	scalar equations, and the question of the training dynamics of heterogeneous
	coupled systems, where components of different natures coexist, has not been
	tackled.
	
	By a heterogeneous coupled system we mean a system in which part of the
	components undergoes dominant convective transport and the other undergoes
	pure diffusion, the two being linked by a linear coupling. This structure
	appears in models of heat exchangers, of reactive transport in porous media,
	and of coupled heat--moisture drying, where an advected fluid is coupled to a
	diffusive solid. For the finite element discretization of such systems, it is
	known that dominant convection degrades the condition number and requires a
	stabilization targeted at the convective components only
	\cite{johnnovo2011,roos2008}. The present study establishes that an analogous
	phenomenon governs the training dynamics of variational networks applied to
	this class of systems, identifies its mechanism, and derives from it a remedy
	that bears on the architecture of the network.
	
	The starting point is the reduction, in the neural tangent kernel regime, of
	the gradient flow on the residuals to a linear differential system $\dot
	r=-Gr$ whose operator is a Gram matrix. Under frequency localization, this
	matrix reads, frequency by frequency, $\Gh=\Ah\,\Kh\,\Ah\he$, where $\Ah$ is
	the symbol of the system operator and $\Kh$ the matrix tangent kernel. The
	analysis of the behavior of $\Gh$ under dominant convection reveals a
	mechanism that we call double screening, from which the diagnosis of the
	training difficulty and its correction are deduced.
	
	Our main contributions are the following.
	\begin{enumerate}[label=\textup{(\arabic*)},leftmargin=2.2em]
		\item We prove a double screening theorem (Theorem~\ref{thm:main}): under
		dominant convection, the Schur complement of $\Gh$ relative to the block of
		convective components converges to $S_{DD}\,(\Schur{\Kh}{\Kh_{CC}})\,S_{DD}\he$,
		where $S_{DD}$ is the diffusive block of the symbol stripped of all coupling
		and $\Schur{\Kh}{\Kh_{CC}}$ the Schur complement of the kernel. The
		inter-equation coupling and the part of the kernel correlated with the
		convective components both disappear from the limit.
		\item We derive four spectral consequences (Section~\ref{sec:coro}): a closed
		identity for the screened coupling energy (Corollary~\ref{cor:couplage}), a
		degradation law for the training rate governed by the canonical correlations
		of the kernel (Corollary~\ref{cor:rho}), a bound on the condition number in
		$\Pe^{2}$ where $\Pe$ is the P\'eclet number (Corollary~\ref{cor:cond}), and
		the non-participation of temporal frequencies in the screening
		(Corollary~\ref{cor:temps}).
		\item We show that the training difficulty pertains to the optimizer and not
		to the approximation (Section~\ref{sec:optim}): under continuous gradient flow
		the convergence rate is independent of the convection, but the stability
		constraint on the shared descent step makes the number of iterations grow like
		$\Pe^{2}$.
		\item We establish that the Adam optimizer mitigates this difficulty provided
		that the architecture separates the parameters of the convective and diffusive
		components (Section~\ref{sec:adam}), a condition that coincides with the
		architectural prescription coming from the second screening. This result is
		observed numerically and motivated by the theory.
		\item We validate all the predictions numerically (Section~\ref{sec:num}),
		down to machine precision on the exact symbol, in a discrete setting without
		translation invariance, on systems of $n\le 6$ equations with dense couplings,
		on a two-dimensional exchanger, and by the full training of finite-width
		networks.
	\end{enumerate}
	
	The remainder of the article is organized as follows.
	Section~\ref{sec:notation} fixes the notation. Section~\ref{sec:cadre}
	presents the system, the variational formulation, the neural network, and the
	reduction in frequency. Section~\ref{sec:main} proves the double screening
	theorem, Section~\ref{sec:coro} draws its spectral consequences,
	Section~\ref{sec:optim} its consequences for optimization, and
	Section~\ref{sec:adam} its interaction with the Adam optimizer.
	Section~\ref{sec:hp} establishes two localization results.
	Section~\ref{sec:num} gathers the numerical validations and
	Section~\ref{sec:conclusion} concludes. The longest proofs are deferred to an
	appendix.
	
	\section{Notation}
	\label{sec:notation}
	
	We collect here the notation used throughout the article.

	For a matrix $M\in\C^{p\times q}$, $M\tr$ denotes its transpose,
	$M\he=\overline{M}\tr$ its conjugate transpose, $\overline{M}$ its conjugate.
	The identity matrix of order $p$ is denoted $I_p$. For $M$ Hermitian, its real
	eigenvalues are ordered and we denote by $\lmin(M)$ and $\lmax(M)$ the
	smallest and the largest; $M\succ 0$ (respectively $M\succeq 0$) means that
	$M$ is positive definite (respectively positive semidefinite), and
	$M_1\preceq M_2$ denotes the Loewner order $M_2-M_1\succeq 0$. The spectral
	norm is $\norm{M}_2$, the Frobenius norm $\norm{M}_F$, and $\sigma_{\min}(M)$
	the smallest singular value. The spectral condition number of a Hermitian
	positive definite matrix is
	$\cond(M)=\dfrac{\lmax(M)}{\lmin(M)} \cdotp$

	The index set $\{1,\dots,n\}$ is partitioned as
	\begin{equation}
		\label{eq:partition}
		\{1,\dots,n\}=C\,\dot\cup\,D,\qquad |C|=m\ge 1,\qquad |D|=n-m\ge 1,
	\end{equation}
	where $C$ is the set of convective components and $D$ that of the diffusive
	components. For $M\in\C^{n\times n}$ and this partition, the blocks are denoted
	$M_{CC}\in\C^{m\times m}$, $M_{DD}$, $M_{CD}$, $M_{DC}$. When $M_{CC}$ is
	invertible, the Schur complement of $M_{CC}$ in $M$ is
	\begin{equation}
		\label{eq:schur-def}
		\Schur{M}{M_{CC}}:=M_{DD}-M_{DC}\,M_{CC}^{-1}\,M_{CD}.
	\end{equation}
	For $M$ Hermitian with $M_{CC}\succ 0$, one has the variational
	characterization
	\begin{equation}
		\label{eq:schur-var}
		x_D\he\,(\Schur{M}{M_{CC}})\,x_D
		=\min_{x_C\in\C^{m}}
		\begin{pmatrix}x_C\\x_D\end{pmatrix}\he
		M\begin{pmatrix}x_C\\x_D\end{pmatrix},
	\end{equation}
	the minimum being attained at $x_C^{\star}=-M_{CC}^{-1}M_{CD}\,x_D$.

	The spatial domain $\Omega\subset\R^{d}$ is a bounded Lipschitz open set,
	$T>0$, and $Q =\Omega\times(0,T)$ the space-time cylinder. We denote by
	$z=(x,t)$ a point of $Q$. The usual Sobolev spaces are $H^{s}(\Omega)$,
	$H^{1}_{0}(\Omega)$, and $C_c^{\infty}(Q)$ the space of infinitely
	differentiable functions with compact support in $Q$. The inner product of
	$L^{2}(Q)^{n}$ is denoted $\ip{\cdot}{\cdot}$.

	The spatial frequency is $\xi$, taken in the direction of the transport field,
	and $\omega$ the temporal frequency. The space-time symbol of the system
	operator is $\Ah(\omega,\xi)$, its part independent of the convection
	intensity $S(\omega,\xi)$, and the matrix tangent kernel in frequency
	$\Kh(\omega,\xi)$. The Gram matrix in frequency is $\Gh(\omega,\xi)$. The
	diagonal projector onto the convective components is
	$P=\diag(\mathbf 1_{i\in C})$. The P\'eclet number at frequency $\xi$ is
	\begin{equation}
		\label{eq:peclet-def}
		\Pe=\dfrac{b}{\varepsilon_D\,\xi},
		\qquad \varepsilon_D =\min_{i\in D}\varepsilon_i,
	\end{equation}
	where $b$ is the convection intensity and $\varepsilon_i$ the diffusion
	coefficients.
	
	\section{Framework}
	\label{sec:cadre}
	
	\subsection{Heterogeneous coupled parabolic systems}
	\label{ssec:systeme}
	
	We consider the parabolic convection--diffusion--reaction system on the
	cylinder $Q=\Omega\times(0,T)$,
	\begin{equation}
		\label{eq:systeme}
		\partial_t u+\mathcal{L}u=f\quad\text{in }Q,
		\qquad
		\mathcal{L}u =-\Div\,\bigl(E\,\nabla u\bigr)
		+b\,(\beta\cdot\nabla)\,Pu+Cu,
	\end{equation}
	supplemented with mixed Dirichlet--Neumann boundary conditions on
	$\partial\Omega\times(0,T)$ and an initial condition on $\Omega\times\{0\}$.
	Here $u=(u_1,\dots,u_n)\tr$, $E=\diag(\varepsilon_1,\dots,\varepsilon_n)$ with
	$\varepsilon_i>0$, $\beta\in\R^{d}$ a constant unit transport field, $b>0$ the
	convection intensity, $P=\diag(\mathbf 1_{i\in C})$ the projector onto the
	convective components, and $C\in\R^{n\times n}$ a coupling matrix gathering
	reaction and inter-component exchanges. This class covers multi-region heat
	exchanger models, where one or several convected fluids are linked by linear
	exchanges to purely diffusive solids, the matrix $C$ carrying the graph
	structure of the exchanges. In these models, the coupling $C$ confers on the
	diffusive equations their zeroth-order coercivity, and the heterogeneity, that
	is, the coexistence of convected and non-convected components, is structural.
	The stationary case $\mathcal{L}u=f$ corresponds to the slice $\omega=0$ of
	the frequency analysis below and calls for no separate treatment.
	
	\subsection{Variational formulation and neural network}
	\label{ssec:vpinn}
	
	We define the neural network following the neural tangent kernel
	parametrization. Let $L\ge 1$ be the number of hidden layers, $n_0=d+1$ the
	input dimension (space and time variables), $n_L$ the width of the last hidden
	layer, and $\sigma$ a Lipschitz activation function applied componentwise. For
	$z\in Q$, the preactivations $h^{(\ell)}(z)\in\R^{n_\ell}$ are defined
	recursively by
	\begin{equation}
		\label{eq:reseau}
		h^{(1)}(z)=\dfrac{1}{\sqrt{n_0}}\,W^{(1)}z+b^{(1)},
		\qquad
		h^{(\ell+1)}(z)=\dfrac{1}{\sqrt{n_\ell}}\,W^{(\ell+1)}\sigma\,\bigl(h^{(\ell)}(z)\bigr)+b^{(\ell+1)},
	\end{equation}
	for $\ell=1,\dots,L-1$, and the vector output $u_\theta(z)\in\R^{n}$ that
	approximates the $n$ components of the solution is
	\begin{equation}
		\label{eq:sortie}
		u_\theta(z)=\dfrac{1}{\sqrt{n_L}}\,W^{(L+1)}\sigma\,\bigl(h^{(L)}(z)\bigr)+b^{(L+1)}.
	\end{equation}
	The parameters $\theta$ gather the weights $W^{(\ell)}$ and the biases
	$b^{(\ell)}$, initialized independently according to the standard normal
	distribution $\mathcal{N}(0,1)$. The scaling factor $\dfrac{1}{\sqrt{n_\ell}}$
	at each layer is the neural tangent kernel parametrization; it guarantees that
	at initialization the preactivations remain of unit order and that the tangent
	kernel defined below admits a deterministic limit as the widths tend to
	infinity. The boundary and initial conditions are imposed exactly by a
	multiplicative lifting of the output, so that the loss contains only the
	interior residuals; this assumption isolates the mechanism under study from
	the questions of weighting between boundary and interior residuals.
	
	Let $\{v_k\}_{k=1}^{N}\subset C_c^{\infty}(Q)$ be a family of space-time test
	functions with compact support and $\mathcal{A}=\partial_t+\mathcal{L}$ the
	space-time operator of \eqref{eq:systeme}. For each component
	$i\in\{1,\dots,n\}$ and each test function $k$, the variational residual is
	\begin{equation}
		\label{eq:residu}
		r_{k,i}(\theta)
		=\ip{\mathcal{A}u_\theta-f}{v_k\,e_i}
		=\int_Q\bigl(\mathcal{A}u_\theta-f\bigr)_i\,v_k\,\mathrm{d}z,
	\end{equation}
	where $e_i$ is the $i$-th vector of the canonical basis of $\R^{n}$, and the
	variational loss is
	\begin{equation}
		\label{eq:perte}
		J(\theta)=\dfrac{1}{2}\sum_{k=1}^{N}\sum_{i=1}^{n}\bigl|r_{k,i}(\theta)\bigr|^{2}
		=\dfrac{1}{2}\,\norm{r(\theta)}_2^{2},
		\qquad r\in\R^{nN}.
	\end{equation}
	
	\subsection{Gradient flow and the tangent kernel regime}
	\label{ssec:ntk}
	
	The neural tangent kernel of the network
	\eqref{eq:reseau}--\eqref{eq:sortie} is the matrix
	\begin{equation}
		\label{eq:ntk-def}
		\Theta_\theta(z,z')
		=\left[\ip{\partial_\theta (u_\theta)_p(z)}{\partial_\theta (u_\theta)_q(z')}\right]_{p,q=1}^{n}
		\in\R^{n\times n}.
	\end{equation}
	Training by gradient flow reads $\dot\theta=-\nabla_\theta J(\theta)$, and the
	chain rule gives, for the residual vector,
	\begin{equation}
		\label{eq:flot-r}
		\dot r
		=\dfrac{\partial r}{\partial\theta}\,\dot\theta
		=-\dfrac{\partial r}{\partial\theta}\left(\dfrac{\partial r}{\partial\theta}\right)\tr r
		=-G(\theta)\,r,
	\end{equation}
	where $G(\theta)\in\R^{nN\times nN}$ is the positive semidefinite Gram matrix
	of the residual gradients. Denoting by $\mathcal{A}^{*}$ the formal adjoint of
	$\mathcal{A}$ and by $\mathcal{K}$ the integral operator with kernel
	$\Theta_\theta$, a direct computation from \eqref{eq:residu} yields
	\begin{equation}
		\label{eq:gram}
		G_{(k,i),(l,j)}
		=\ip{\mathcal{A}^{*}(v_k e_i)}{\mathcal{K}\,\mathcal{A}^{*}(v_l e_j)},
		\qquad
		\mathcal{K}w =\int_Q\Theta_\theta(\cdot,z')\,w(z')\,\mathrm{d}z'.
	\end{equation}
	In the infinite-width limit, the kernel $\Theta_\theta$ converges to a
	deterministic kernel $\Theta_\infty$ independent of $\theta$ and constant
	along training. The system \eqref{eq:flot-r} is then linear with constant
	coefficients,
	\begin{equation}
		\label{eq:flot-lin}
		\dot r=-G\,r,\qquad r(t)=e^{-Gt}\,r(0),
	\end{equation}
	and the whole training dynamics is encoded in the spectrum of $G$. The output
	of the network evolves in an affine subspace of dimension at most $nN$; the
	variational network is thus a finite-rank method, which makes its spectral
	analysis exact.
	
	\subsection{Frequency reduction and symbol}
	\label{ssec:symbole}
	
	Algorithm~\ref{alg:vpinn} summarizes the variational training under
	consideration, in its shared gradient-step form and in its weighted form
	\eqref{eq:perte-w}, the latter being introduced in
	Section~\ref{ssec:optim-pond}.
	
	\begin{algorithm}[H]
		\caption{Variational training of the system \eqref{eq:systeme}}
		\label{alg:vpinn}
		\begin{algorithmic}[1]
			\State \textbf{Input:} test functions $\{v_k\}_{k=1}^{N}$, residual weights
			$\left\{w_i\right\}_{i=1}^{n}$, step $\eta>0$, number of iterations $T$.
			\State Initialize the parameters $\theta^{0}$ according to \eqref{eq:reseau}.
			\For{$t=0,\dots,T-1$}
			\For{$k=1,\dots,N$ and $i=1,\dots,n$}
			\State $r_{k,i}\gets \ip{\mathcal{A}u_{\theta^{t}}-f}{v_k e_i}$
			\Comment{variational residual \eqref{eq:residu}}
			\EndFor
			\State $J_W\gets \dfrac12 \displaystyle\sum_{k,i} w_i^{2}\,|r_{k,i}|^{2}$
			\State $\theta^{t+1}\gets \theta^{t}-\eta\,\nabla_\theta J_W(\theta^{t})$
			\Comment{or adaptive step (Algorithm~\ref{alg:adam})}
			\EndFor
			\State \textbf{Output:} $u_{\theta^{T}}$.
		\end{algorithmic}
	\end{algorithm}
	
	We place the analysis in the neural tangent kernel regime, whose content we
	recall. For a fully connected network under the parametrization
	\eqref{eq:reseau}--\eqref{eq:sortie}, two asymptotic results are established in
	the limit where the widths $n_1,\dots,n_L$ tend to infinity successively: at
	initialization, the output $u_\theta$ converges in law to a centered Gaussian
	process \cite{neal1996,lee2018}, and the tangent kernel $\Theta_\theta$
	converges in probability to a deterministic kernel $\Theta_\infty$ that
	remains constant along training by gradient flow \cite{jacot2018}. These
	results are proved for fully connected architectures and standard losses;
	their extension to losses containing a differential operator, such as that of
	physics-informed networks, is verified empirically but is established
	rigorously only for one hidden layer \cite{wang2022}. We therefore adopt the
	following assumption, whose validity for the architecture and the system
	considered is confirmed numerically by the training of finite-width networks.
	
	\begin{assumption}[Neural tangent kernel regime]
		\label{hyp:ntk}
		In the infinite-width limit, the tangent kernel $\Theta_\theta$ of the network
		\eqref{eq:reseau}--\eqref{eq:sortie} converges to a deterministic kernel
		$\Theta_\infty$, independent of $\theta$ and constant along training, so that
		the residual dynamics is governed by the constant-coefficient linear system
		\eqref{eq:flot-lin}.
	\end{assumption}
	
	Assumption~\ref{hyp:ntk} is a model of the training dynamics, in the same way
	that von Neumann analysis is a linearized, frozen-coefficient model of the
	stability of difference schemes. The networks used in practice often operate
	outside this regime, and for nonlinear systems the kernel evolves during
	training; the predicted scaling laws are nonetheless observed in the full
	training of finite-width networks.
	
	The spectral analysis of $G$ reduces to that of a family of $n\times n$
	matrices under the following frequency-localization assumptions, which are not
	restrictive in practice.
	
	\begin{assumption}
		\label{hyp:fourier}
		The test functions $\{v_k\}_{k=1}^{N}$ have compact support in $Q$. The
		tangent kernel is translation invariant in space-time,
		$\Theta_\infty(z,z')=K(z-z')$, with Fourier transform
		$\Kh(\omega,\xi)\in\C^{n\times n}$ Hermitian positive definite, and the test
		functions are concentrated near a frequency $(\omega,\xi)$.
	\end{assumption}
	
	Under Assumption~\ref{hyp:fourier}, the symbol of
	$\mathcal{A}=\partial_t+\mathcal{L}$ at frequency $(\omega,\xi)$, the spatial
	frequency being taken in the direction $\beta$, is
	\begin{equation}
		\label{eq:symbole-A}
		\Ah(\omega,\xi)
		=\underbrace{E\,\xi^{2}+C+i\,\omega\,I}_{=:S(\omega,\xi)}+i\,b\,\xi\,P
		\in\C^{n\times n},
	\end{equation}
	the sum of a part $S(\omega,\xi)$ independent of $b$, which gathers diffusion,
	coupling, and temporal frequency, and a purely imaginary convective part
	carried by the projector $P$. The matrix $S$ is complex as soon as
	$\omega\neq 0$; the main theorem requires of $S$ no particular structure. The
	adjoint has symbol $\Ah\he$, and the Gram matrix \eqref{eq:gram} block
	diagonalizes over frequency blocks: for test functions near $(\omega,\xi)$,
	\begin{equation}
		\label{eq:Ghat}
		\Gh(\omega,\xi)=\Ah(\omega,\xi)\,\Kh(\omega,\xi)\,\Ah(\omega,\xi)\he,
	\end{equation}
	up to a scalar mass factor, with no effect on the relative spectral quantities.
	The matrix $\Gh$ is Hermitian positive semidefinite, positive definite as soon
	as $\Ah$ is invertible, which we will assume. The object of the article is the
	spectral behavior of \eqref{eq:Ghat} in the dominant-convection regime
	$b\to\infty$, at fixed $(\omega,\xi)$, $E$, $C$ and $\Kh$. Note that
	$\Ah_{DD}(\omega,\xi)=S_{DD}(\omega,\xi)$ does not depend on $b$, since $P$
	vanishes on $D$. The dependence on $(\omega,\xi)$ is omitted when it is
	unambiguous.
	
	\section{The double screening theorem}
	\label{sec:main}
	
	The main result describes the limit, as $b\to\infty$, of the Schur complement
	of $\Gh$ relative to the convective block. The statement is purely algebraic:
	$S$ denotes there an arbitrary complex matrix, which covers within a single
	statement the stationary case $S=E\xi^{2}+C$ and the parabolic case
	$S=E\xi^{2}+C+i\omega I$.
	
	\begin{theorem}[Double screening]
		\label{thm:main}
		Let $\Gh=\Ah\,\Kh\,\Ah\he$ with $\Ah=S+ib\xi P$, where $S\in\C^{n\times n}$,
		$\xi\neq 0$, $P$ is the orthogonal projector onto the components of $C$, and
		$\Kh\in\C^{n\times n}$ is Hermitian positive definite. Then, for $b$ large
		enough, $\Gh_{CC}$ is invertible and
		\begin{equation}
			\label{eq:main}
			\Schur{\Gh}{\Gh_{CC}}
			=S_{DD}\,\bigl(\Schur{\Kh}{\Kh_{CC}}\bigr)\,S_{DD}\he+O(b^{-1})
			\qquad(b\to\infty).
		\end{equation}
		In particular, the limit depends neither on the coupling blocks $S_{DC}$,
		$S_{CD}$ of the system, nor on the blocks $\Kh_{DD}$, $\Kh_{CD}$ of the kernel
		other than through the Schur complement $\Schur{\Kh}{\Kh_{CC}}$.
	\end{theorem}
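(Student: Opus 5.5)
The plan is to work with the quadratic form of $\Gh$ and the variational characterization \eqref{eq:schur-var}, and to change variables so that the dependence on $b$ becomes an $O(b^{-1})$ perturbation of a $b$-independent congruence of $\Kh$. The limit then falls out of continuity of the Schur complement.

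\emph{Invertibility of $\Gh_{CC}$.} Write $\Ah_{C\cdot}=\bigl(S_{CC}+ib\xi I_m,\ S_{CD}\bigr)$ for the rows of $\Ah$ indexed by $C$. Then $\Gh_{CC}=\Ah_{C\cdot}\,\Kh\,\Ah_{C\cdot}\he$. As soon as $|b\xi|>\norm{S_{CC}}_2$, the block $S_{CC}+ib\xi I_m$ is invertible, so $\Ah_{C\cdot}$ has full row rank. Since $\Kh\succ 0$, this gives $\Gh_{CC}\succ 0$, which justifies both the Schur complement and \eqref{eq:schur-var}.

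\emph{Change of variables.} For $x=(x_C,x_D)$, set $y=\Ah\he x$. In block form,
\[
y_C=M_b\,x_C+S_{DC}\he x_D,\qquad y_D=S_{CD}\he x_C+S_{DD}\he x_D,\qquad M_b:=S_{CC}\he-ib\xi I_m .
\]
For $b$ large, $M_b$ is invertible with $\norm{M_b^{-1}}_2=O(b^{-1})$. The map $x_C\mapsto y_C$ is therefore a bijection of $\C^m$ for fixed $x_D$, and $x_C=M_b^{-1}(y_C-S_{DC}\he x_D)$. Substituting, $y=T_b\,w$ with $w=(y_C,x_D)$ and
\[
T_b=\begin{pmatrix} I_m & 0\\ S_{CD}\he M_b^{-1} & S_{DD}\he-S_{CD}\he M_b^{-1}S_{DC}\he\end{pmatrix}
=T_0+O(b^{-1}),\qquad T_0=\diag\bigl(I_m,\,S_{DD}\he\bigr).
\]
By \eqref{eq:schur-var}, minimizing over $x_C$ is the same as minimizing over $y_C$. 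Hence
\[
x_D\he\bigl(\Schur{\Gh}{\Gh_{CC}}\bigr)x_D=\min_{y_C}\,w\he\bigl(T_b\he\Kh T_b\bigr)w ,
\]
so $\Schur{\Gh}{\Gh_{CC}}=\Schur{H_b}{(H_b)_{CC}}$ with $H_b:=T_b\he\Kh T_b$. This is an exact identity. Because the first block row of $T_b$ is $(I_m,0)$, one has $(H_b)_{CC}=\Kh_{CC}+O(b^{-1})$, which is uniformly invertible.

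\emph{Passage to the limit.} The step I expect to need the most care is the uniform $O(b^{-1})$ control, and I would handle it through continuity of the Schur complement. The map $X\mapsto X_{DD}-X_{DC}X_{CC}^{-1}X_{CD}$ is smooth, hence locally Lipschitz, on the open set where $X_{CC}$ is invertible. Since $H_b=H_0+O(b^{-1})$ with $H_0=T_0\he\Kh T_0$ and $(H_0)_{CC}=\Kh_{CC}\succ0$, we get
\[
\Schur{\Gh}{\Gh_{CC}}=\Schur{H_0}{(H_0)_{CC}}+O(b^{-1}).
\]
A direct computation with the block-diagonal $T_0$ gives $(H_0)_{CC}=\Kh_{CC}$, $(H_0)_{CD}=\Kh_{CD}S_{DD}\he$, $(H_0)_{DC}=S_{DD}\Kh_{DC}$ and $(H_0)_{DD}=S_{DD}\Kh_{DD}S_{DD}\he$. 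Hence
\[
\Schur{H_0}{(H_0)_{CC}}=S_{DD}\bigl(\Schur{\Kh}{\Kh_{CC}}\bigr)S_{DD}\he ,
\]
which is \eqref{eq:main}.

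The final assertion is then read off the formula. $S_{CD}$ and $S_{DC}$ enter only through $T_b-T_0=O(b^{-1})$, and $\Kh$ enters the limit only through $\Schur{\Kh}{\Kh_{CC}}$. The only bookkeeping to watch is the conjugations: $\Ah\he$ carries $-ib\xi$ and transposes the off-diagonal blocks.
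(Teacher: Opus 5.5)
Your proof is correct, and it follows a partly different route from the paper's. The start is the same as the paper's Steps~1--2. You substitute $y=\Ah\he x$ and trade $x_C$ for the free variable $y_C$. The second block row of your $T_b$ is exactly the paper's relation $y_D=\sigma_{DD}x_D+\delta(y_C,x_D)$.

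The finish differs. The paper keeps all the $b$-dependence inside the perturbation $\delta$ and evaluates the unperturbed problem through the variational characterization of $\Schur{\Kh}{\Kh_{CC}}$. It then needs the appendix, which has three parts:
\begin{itemize}
\item a two-sided estimate on the perturbed minimum;
\item for the lower bound, a uniform bound on the perturbed minimizers, holding uniformly on the unit sphere;
\item a passage from quadratic forms to the operator norm.
\end{itemize}
You instead recognize the reparametrization as an exact congruence. You have $\Ah\he=T_bU$ with $U$ block upper triangular, $U_{CC}=M_b$, $U_{CD}=S_{DC}\he$ and $U_{DD}=I$. Hence $\Gh=U\he H_bU$, and $\Schur{\Gh}{\Gh_{CC}}=\Schur{H_b}{(H_b)_{CC}}$ holds exactly for every large $b$. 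The $O(b^{-1})$ estimate then follows at once from $T_b=T_0+O(b^{-1})$ and the local Lipschitz continuity of $X\mapsto\Schur{X}{X_{CC}}$. The limit itself is obtained by a direct block computation rather than a second minimization.

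What your route buys: it is shorter and makes the whole appendix unnecessary. It also exhibits $\Schur{\Gh}{\Gh_{CC}}$ as a smooth function of $M_b^{-1}=\frac{i}{b\xi}I+O(b^{-2})$. A first-order expansion of the Schur map at $H_0$ then recovers the structure claimed in Remark~\ref{rem:ordre}: for real $S$ and $\Kh$, the $b^{-1}$ term is $i$ times a real antisymmetric matrix. What the paper's route buys: explicit constants $c_1$, $c_2$, and a visible account of the screening mechanism. That mechanism is the lever $x_C=O(b^{-1})$ together with the optimally sacrificed direction $y_C^{\star}$, which is what gives the theorem its name.

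One small slip. You attribute $(H_b)_{CC}=\Kh_{CC}+O(b^{-1})$ to the first block \emph{row} of $T_b$ being $(I_m,\ 0)$. What actually governs $(H_b)_{CC}$ is the first block \emph{column}, whose blocks are $I_m$ and $S_{CD}\he M_b^{-1}$. Because that column has full column rank, $(H_b)_{CC}\succ 0$ holds exactly whenever $M_b$ is invertible, not merely for $b$ large. This legitimizes applying \eqref{eq:schur-var} to $H_b$.
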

	
	\begin{proof}
		The proof applies twice the variational characterization
		\eqref{eq:schur-var}, once at the level of $\Gh$ and once at the level of
		$\Kh$.
		
		\emph{Step 1.}
		For $x=(x_C,x_D)\in\C^{n}$, set $q(x)=x\he\Gh x=\norm{\Kh^{1/2}\Ah\he x}_2^{2}$
		and $y=\Ah\he x=S\he x-ib\xi Px$. Writing $\sigma =S\he$, the block
		decomposition gives
		\begin{equation}
			\label{eq:y-blocs}
			y_C=\sigma_{CC}x_C+\sigma_{CD}x_D-ib\xi x_C,
			\qquad
			y_D=\sigma_{DC}x_C+\sigma_{DD}x_D.
		\end{equation}
		For $b\xi>\norm{\sigma_{CC}}_2$, the matrix $T_b =\sigma_{CC}-ib\xi I_m$ is
		invertible and
		\begin{equation}
			\label{eq:Tb-inv}
			\norm{T_b^{-1}}_2\le\dfrac{1}{b\xi-\norm{\sigma_{CC}}_2}=O(b^{-1}).
		\end{equation}
		At fixed $x_D$, the map $x_C\mapsto y_C=T_bx_C+\sigma_{CD}x_D$ is an affine
		bijection of $\C^{m}$, with inverse $x_C=T_b^{-1}(y_C-\sigma_{CD}x_D)$.
		
		\emph{Step 2.}
		By \eqref{eq:schur-var} applied to $\Gh$,
		\begin{equation*}
			x_D\he\,(\Schur{\Gh}{\Gh_{CC}})\,x_D
			=\min_{x_C\in\C^{m}}q(x_C,x_D)
			=\min_{y_C\in\C^{m}}y\he\Kh y,
		\end{equation*}
		the second equality using the bijection $x_C\leftrightarrow y_C$ of Step~1, and
		\begin{equation}
			\label{eq:yD-pert}
			y_D=\sigma_{DD}x_D
			+\underbrace{\sigma_{DC}T_b^{-1}\bigl(y_C-\sigma_{CD}x_D\bigr)}_{=:\,\delta(y_C,x_D)},
			\qquad
			\norm{\delta}_2\le\dfrac{c_1\bigl(\norm{y_C}_2+\norm{x_D}_2\bigr)}{b\xi-\norm{\sigma_{CC}}_2},
		\end{equation}
		with $c_1=\max\bigl(\norm{\sigma_{DC}}_2,\norm{\sigma_{DC}}_2\norm{\sigma_{CD}}_2\bigr)$.
		The variable $y_C$ is free: the component $x_C$, of amplitude $O(b^{-1})$,
		serves as a lever to place $y_C$, and its residual influence on $y_D$ is the
		perturbation $\delta=O(b^{-1})$. This is the screening of the convective block.
		
		\emph{Step 3.}
		Suppose first $\delta=0$, that is, $y_D=\sigma_{DD}x_D$ fixed and $y_C$ free.
		By \eqref{eq:schur-var} applied to $\Kh$,
		\begin{equation*}
			\min_{y_C\in\C^{m}}
			\begin{pmatrix}y_C\\y_D\end{pmatrix}\he
			\Kh
			\begin{pmatrix}y_C\\y_D\end{pmatrix}
			=y_D\he\,\bigl(\Schur{\Kh}{\Kh_{CC}}\bigr)\,y_D
			=x_D\he\,\sigma_{DD}\he\,\bigl(\Schur{\Kh}{\Kh_{CC}}\bigr)\,\sigma_{DD}\,x_D,
		\end{equation*}
		the minimum being attained at $y_C^{\star}=-\Kh_{CC}^{-1}\Kh_{CD}y_D$, which
		satisfies $\norm{y_C^{\star}}_2\le c_2\norm{x_D}_2$ with
		$c_2 =\norm{\Kh_{CC}^{-1}\Kh_{CD}}_2\norm{\sigma_{DD}}_2$, a bound uniform in
		$b$. Since $\sigma_{DD}=(S_{DD})\he$, this value is the right-hand side of
		\eqref{eq:main} evaluated at $x_D$. The kernel acts only through
		$\Schur{\Kh}{\Kh_{CC}}$, because the convective direction $y_C$ is optimally
		sacrificed. This is the screening of the kernel.
		
		\emph{Step 4.}
		It remains to show that the perturbation $\delta=O(b^{-1})$ of
		\eqref{eq:yD-pert} changes the value of the minimum only at order $O(b^{-1})$,
		uniformly on the sphere $\norm{x_D}_2=1$. This argument, elementary but
		technical, is deferred to Appendix~\ref{app:preuve-main}. Since two Hermitian
		quadratic forms whose values coincide up to $O(b^{-1})$ on the unit sphere have
		matrices at distance $O(b^{-1})$ in operator norm, one obtains
		\eqref{eq:main}. Finally $\Gh_{CC}=b^{2}\xi^{2}\Kh_{CC}+O(b)\succ 0$ for $b$
		large, which legitimizes the Schur complement.
	\end{proof}
	
	\begin{remark}[Effective order of convergence]
		\label{rem:ordre}
		Theorem~\ref{thm:main} guarantees an error term $O(b^{-1})$. The factor
		$T_b^{-1}=(\sigma_{CC}-ib\xi I)^{-1}=\dfrac{i}{b\xi}I+O(b^{-2})$ is purely
		imaginary at leading order; when $S$ and $\Kh$ are real, the real part of the
		order-$b^{-1}$ term vanishes and the first nonzero correction is $O(b^{-2})$.
		This order $O(b^{-2})$ shows up on the eigenvalues, the purely imaginary
		order-$b^{-1}$ correction being invisible at first order of the spectral
		perturbation of a real symmetric limit.
	\end{remark}
	
	The behavior of the two ends of the spectrum follows.
	
	\begin{corollary}[Spectral asymptotics of $\Gh$]
		\label{cor:spectre}
		Under the hypotheses of Theorem~\ref{thm:main}, with in addition $S_{DD}$
		invertible,
		\begin{align}
			\lmax(\Gh)&=b^{2}\xi^{2}\,\lmax\bigl(\Kh_{CC}\bigr)\,\bigl(1+O(b^{-1})\bigr),
			\label{eq:lmax}\\
			\lmin(\Gh)&\xrightarrow[b\to\infty]{}
			\lmin\Bigl(S_{DD}\,\bigl(\Schur{\Kh}{\Kh_{CC}}\bigr)\,S_{DD}\he\Bigr)>0,
			\label{eq:lmin}
		\end{align}
		and any unit eigenvector $v^{b}$ associated with $\lmin(\Gh)$ satisfies
		$\norm{v_C^{b}}_2=O(b^{-1})$: the slow mode localizes asymptotically in the
		diffusive block.
	\end{corollary}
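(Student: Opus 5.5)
The plan is to read off the top of the spectrum from the dominant block $\Gh_{CC}$, and the bottom of the spectrum from the Schur complement via Theorem~\ref{thm:main}.

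\emph{Largest eigenvalue.} Write $\Ah=ib\xi P+S$, so that
\[
\Gh=b^{2}\xi^{2}P\Kh P+ib\xi\,(P\Kh S\he-S\Kh P)+S\Kh S\he .
\]
The leading term $b^{2}\xi^{2}P\Kh P$ has the nonzero eigenvalues $b^{2}\xi^{2}\lambda(\Kh_{CC})$, and the remaining terms have norm $O(b)$. Weyl's inequality then gives
\[
\lmax(\Gh)=b^{2}\xi^{2}\lmax(\Kh_{CC})+O(b),
\]
which is \eqref{eq:lmax}, since $\lmax(\Kh_{CC})>0$ because $\Kh\succ0$.

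\emph{Smallest eigenvalue.} I use the Schur complement $\Sigma_b:=\Schur{\Gh}{\Gh_{CC}}$ together with the block $LDL\he$ factorization
\[
\Gh=\begin{pmatrix}I&0\\ \Gh_{DC}\Gh_{CC}^{-1}&I\end{pmatrix}\diag(\Gh_{CC},\Sigma_b)\begin{pmatrix}I&\Gh_{CC}^{-1}\Gh_{CD}\\0&I\end{pmatrix}.
\]
By Theorem~\ref{thm:main}, $\Sigma_b\to\Sigma_\infty:=S_{DD}(\Schur{\Kh}{\Kh_{CC}})S_{DD}\he$. This limit is positive definite, because $S_{DD}$ is invertible and the Schur complement of a positive definite matrix is positive definite. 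The main obstacle is that $\Gh_{DC}\Gh_{CC}^{-1}$ need not be small. Indeed $\Gh_{DC}=O(b)$ while $\Gh_{CC}^{-1}=O(b^{-2})$, so $\Gh_{DC}\Gh_{CC}^{-1}=O(b^{-1})$. Hence the triangular factors are $I+O(b^{-1})$.

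I then compare the two ends of the spectrum directly.

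\textbf{Upper bound.} The variational principle gives $\lmin(\Gh)\le \min_{\|x_D\|=1} x\he\Gh x$ over $x=(x_C^\star,x_D)$, where $x_C^\star=-\Gh_{CC}^{-1}\Gh_{CD}x_D$. The value is $x_D\he\Sigma_bx_D$, and the norm is $\|x\|^2=1+O(b^{-2})$. So $\lmin(\Gh)\le\lmin(\Sigma_b)(1+O(b^{-2}))$.

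\textbf{Lower bound.} For any $x$, the variational characterization \eqref{eq:schur-var} gives
\[
x\he\Gh x=(x_C-x_C^\star)\he\Gh_{CC}(x_C-x_C^\star)+x_D\he\Sigma_bx_D .
\]
Here $\Gh_{CC}\succeq c\,b^{2}I$ with $c>0$. This is at least $c b^2\|x_C-x_C^\star\|^2+\lmin(\Sigma_b)\|x_D\|^2$, and since $\|x_C^\star\|=O(b^{-1})\|x_D\|$, a short estimate yields $x\he\Gh x\ge \lmin(\Sigma_b)(1-O(b^{-1}))\|x\|^2$.

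Together these give $\lmin(\Gh)=\lmin(\Sigma_b)+o(1)\to\lmin(\Sigma_\infty)$, which is \eqref{eq:lmin}.

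\emph{Localization of the slow mode.} Take a unit eigenvector $v^b$ for $\lmin(\Gh)$ and apply the identity above with $x=v^b$:
\[
c\,b^2\|v_C^b-x_C^\star\|^2\le \lmin(\Gh)-\lmin(\Sigma_b)\|v_D^b\|^2=O(1).
\]
This is $O(1)$ because both eigenvalues are bounded. Hence $\|v^b_C-x_C^\star\|=O(b^{-1})$, and since $\|x_C^\star\|=O(b^{-1})$, we get $\|v_C^b\|=O(b^{-1})$. This localizes the slow mode in $D$, which completes the plan.
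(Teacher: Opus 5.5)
Your proof is correct and follows essentially the same route as the paper. You use Weyl's inequality on the expansion $\Gh=b^{2}\xi^{2}P\Kh P+O(b)$ for $\lmax$, and for $\lmin$ you sandwich between the Schur complement evaluated at the minimizer $x_C^\star$ and a lower bound that exploits the order-$b^{2}$ penalty on the convective part, combined with Theorem~\ref{thm:main}. The only variation is that you keep the full completing-the-square identity together with the bound $\Gh_{DC}\Gh_{CC}^{-1}=O(b^{-1})$, which gives the lower bound uniformly in $x$ and makes the localization $\norm{v_C^{b}}_2=O(b^{-1})$ explicit; the paper instead applies a separate coercivity estimate $x\he\Gh x\ge\tfrac12 b^{2}\xi^{2}\lmin(\Kh_{CC})\norm{x_C}_2^{2}-c$ to the eigenvector and only sketches the localization.
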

	
	\begin{proof}
		To prove \eqref{eq:lmax}, write
		\[
		\Gh=b^{2}\xi^{2}P\Kh P+bR_1+R_0,
		\]
		where
		\[
		R_1=i\xi(P\Kh S\he-S\Kh P),
		\qquad
		R_0=S\Kh S\he,
		\]
		are independent of $b$ and hence uniformly bounded. Weyl's theorem then gives
		\[
		\bigl|\lmax(\Gh)-b^{2}\xi^{2}\lmax(P\Kh P)\bigr|
		\le
		b\norm{R_1}_2+\norm{R_0}_2.
		\]
		Since $P\Kh P=\Kh_{CC}$, \eqref{eq:lmax} follows immediately.
		
		To prove \eqref{eq:lmin}, set
		\[
		L_b =\Schur{\Gh}{\Gh_{CC}},
		\qquad
		L_\infty =S_{DD}\Schur{\Kh}{\Kh_{CC}}S_{DD}\he.
		\]
		By Theorem~\ref{thm:main},
		\[
		L_b\longrightarrow L_\infty,
		\]
		and $L_\infty\succ0$ since $\Schur{\Kh}{\Kh_{CC}}\succ0$ and $S_{DD}$ is
		invertible.
		
		Evaluating the variational characterization \eqref{eq:schur-var} at the
		minimizer
		\[
		x_C^\star(x_D)
		=
		-\Gh_{CC}^{-1}\Gh_{CD}x_D,
		\]
		one obtains
		\[
		\lmin(\Gh)\le\lmin(L_b).
		\]
		
		Conversely, for every unit vector $x=(x_C,x_D)$,
		\[
		x\he\Gh x
		\ge
		x_D\he L_bx_D
		\ge
		\lmin(L_b)\|x_D\|_2^2.
		\]
		Moreover,
		\[
		x\he\Gh x
		\ge
		\tfrac12 b^2\xi^2\lmin(\Kh_{CC})\|x_C\|_2^2-c.
		\]
		Applying this inequality to an eigenvector associated with $\lmin(\Gh)$, one
		deduces that $\|x_C\|_2=o(1)$ as $b\to\infty$, and hence
		$\|x_D\|_2^2=1-o(1)$. Consequently,
		\[
		\lmin(\Gh)
		\ge
		\lmin(L_b)(1-o(1)).
		\]
		This establishes \eqref{eq:lmin}. Finally, the localization of the eigenvector
		associated with $\lmin(\Gh)$ follows from the same expansion.
	\end{proof}
	
	\section{Spectral consequences}
	\label{sec:coro}
	
	We draw from Theorem~\ref{thm:main} four consequences: the identity of the
	screened coupling energy, the canonical-correlation law, the bound on the
	condition number, and the status of temporal frequencies. Throughout the
	section we set $L_\infty =S_{DD}\,(\Schur{\Kh}{\Kh_{CC}})\,S_{DD}\he$.
	
	\subsection{Screening of the coupling}
	\label{ssec:coro-couplage}
	
	The raw diffusive block of the Gram matrix is
	$\Gh_{DD}=S_{D\bullet}\Kh S_{D\bullet}\he$, where $S_{D\bullet}=(S_{DC}\ \ S_{DD})$
	denotes the $D$ rows of $S$; it is independent of $b$ and contains the
	contribution of the coupling blocks $S_{DC}$, the trace of the zeroth-order
	coercivity that the exchanges confer on the diffusive equations. The following
	corollary shows that the limit \eqref{eq:main} subtracts exactly this
	contribution.
	
	\begin{corollary}[Coupling-gap identity]
		\label{cor:couplage}
		Under the hypotheses of Theorem~\ref{thm:main},
		\begin{equation}
			\label{eq:ecart}
			\Gh_{DD}-L_\infty=X\,\Kh_{CC}\,X\he\succeq 0,
			\qquad
			X =S_{DC}+S_{DD}\,\Kh_{DC}\,\Kh_{CC}^{-1}.
		\end{equation}
		In particular $L_\infty\preceq\Gh_{DD}$, and the gap is exactly the energy,
		measured by $\Kh_{CC}$, of the superposition of the system coupling $S_{DC}$
		and the kernel coupling $\Kh_{DC}$. Two systems with the same block $S_{DD}$
		and the same kernel, but with arbitrarily different couplings $S_{DC}$,
		$S_{CD}$, have the same limit rate.
	\end{corollary}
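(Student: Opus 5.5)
This identity is a direct block computation, with no asymptotics involved. The plan is to expand both $\Gh_{DD}$ and $L_\infty$ in terms of the blocks of $S$ and $\Kh$, and then recognize the difference as a completed square.

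\emph{First step.} Since $P$ vanishes on the $D$ rows, the $D$ rows of $\Ah$ coincide with those of $S$, namely $S_{D\bullet}=(S_{DC}\ \ S_{DD})$. Hence
\[
\Gh_{DD}=S_{DC}\Kh_{CC}S_{DC}\he+S_{DC}\Kh_{CD}S_{DD}\he+S_{DD}\Kh_{DC}S_{DC}\he+S_{DD}\Kh_{DD}S_{DD}\he .
\]

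\emph{Second step.} Substituting the definition of the Schur complement gives
\[
L_\infty=S_{DD}\Kh_{DD}S_{DD}\he-S_{DD}\Kh_{DC}\Kh_{CC}^{-1}\Kh_{CD}S_{DD}\he .
\]
The terms $S_{DD}\Kh_{DD}S_{DD}\he$ cancel in the difference, which leaves
\[
\Gh_{DD}-L_\infty=S_{DC}\Kh_{CC}S_{DC}\he+S_{DC}\Kh_{CD}S_{DD}\he+S_{DD}\Kh_{DC}S_{DC}\he+S_{DD}\Kh_{DC}\Kh_{CC}^{-1}\Kh_{CD}S_{DD}\he .
\]

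\emph{Third step.} Set $X=S_{DC}+S_{DD}\Kh_{DC}\Kh_{CC}^{-1}$ and expand $X\Kh_{CC}X\he$. This uses the Hermitian relations $\Kh_{CD}=\Kh_{DC}\he$ and $(\Kh_{CC}^{-1})\he=\Kh_{CC}^{-1}$, so that
\[
X\he=S_{DC}\he+\Kh_{CC}^{-1}\Kh_{CD}S_{DD}\he .
\]
The four resulting terms are
\[
S_{DC}\Kh_{CC}S_{DC}\he,\quad S_{DC}\Kh_{CD}S_{DD}\he,\quad S_{DD}\Kh_{DC}S_{DC}\he,\quad S_{DD}\Kh_{DC}\Kh_{CC}^{-1}\Kh_{CD}S_{DD}\he .
\]
They match the four terms above exactly, which proves \eqref{eq:ecart}.

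\emph{Consequences.} Positivity follows because $\Kh_{CC}\succ0$, as a principal block of the positive definite $\Kh$. Therefore $X\Kh_{CC}X\he\succeq0$, and hence $L_\infty\preceq\Gh_{DD}$.

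The final sentence of the corollary needs no further identity. It follows from Theorem~\ref{thm:main}, whose limit $L_\infty$ involves only $S_{DD}$ and $\Kh$. Consequently, $\lmin(L_\infty)$, the limit rate given by Corollary~\ref{cor:spectre}, is the same for any choice of $S_{DC}$ and $S_{CD}$.

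The only point requiring care is the bookkeeping of conjugate transposes in the cross terms. Everything else is routine.
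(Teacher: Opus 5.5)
Your proof is correct and follows the paper's argument essentially verbatim. You expand $\Gh_{DD}$ using that the $D$ rows of $\Ah$ coincide with those of $S$, subtract $L_\infty$, recognize the four remaining terms as the expansion of $X\Kh_{CC}X\he$ via $\Kh_{CD}=\Kh_{DC}\he$, and obtain positivity from $\Kh_{CC}\succ 0$; your explicit remark that the last sentence follows because $L_\infty$ involves only $S_{DD}$ and $\Kh$ is a point the paper leaves implicit.
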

	
	\begin{proof}
		Expanding $\Gh_{DD}$ gives
		\begin{equation*}
			\Gh_{DD}=S_{DC}\Kh_{CC}S_{DC}\he+S_{DC}\Kh_{CD}S_{DD}\he
			+S_{DD}\Kh_{DC}S_{DC}\he+S_{DD}\Kh_{DD}S_{DD}\he,
		\end{equation*}
		while
		$L_\infty=S_{DD}\Kh_{DD}S_{DD}\he-S_{DD}\Kh_{DC}\Kh_{CC}^{-1}\Kh_{CD}S_{DD}\he$.
		The difference is
		\begin{equation*}
			S_{DC}\Kh_{CC}S_{DC}\he+S_{DC}\Kh_{CD}S_{DD}\he
			+S_{DD}\Kh_{DC}S_{DC}\he+S_{DD}\Kh_{DC}\Kh_{CC}^{-1}\Kh_{CD}S_{DD}\he,
		\end{equation*}
		that is, the expansion of $X\Kh_{CC}X\he$ for
		$X=S_{DC}+S_{DD}\Kh_{DC}\Kh_{CC}^{-1}$, using $\Kh_{CD}=\Kh_{DC}\he$. The
		positivity follows from $\Kh_{CC}\succ 0$.
	\end{proof}
	
	The coercivity brought by the inter-equation coupling therefore does not
	contribute to the effective training rate of the diffusive components: it is
	entirely screened by the convective components, and the identity
	\eqref{eq:ecart} quantifies exactly the energy lost. The coupling that
	improves the stability of the continuous problem is offset in the training
	dynamics as soon as convection dominates.
	
	\subsection{Screening of the kernel and canonical correlations}
	\label{ssec:coro-rho}
	
	When the convective and diffusive outputs share parameters, the tangent kernel
	has nonzero cross blocks $\Kh_{CD}$. The cost of this correlation is measured
	by the canonical correlations between the $C$ and $D$ blocks of the kernel.
	
	\begin{corollary}[Canonical-correlation law]
		\label{cor:rho}
		Under the hypotheses of Theorem~\ref{thm:main}, set
		\begin{equation*}
			R=\Kh_{CC}^{-1/2}\,\Kh_{CD}\,\Kh_{DD}^{-1/2}\in\C^{m\times(n-m)},
			\qquad
			\rho=\norm{R}_2<1.
		\end{equation*}
		Then
		\begin{equation}
			\label{eq:schur-canonique}
			\Schur{\Kh}{\Kh_{CC}}=\Kh_{DD}^{1/2}\bigl(I-R\he R\bigr)\Kh_{DD}^{1/2},
			\qquad
			(1-\rho^{2})\,\Kh_{DD}\preceq\Schur{\Kh}{\Kh_{CC}}\preceq\Kh_{DD},
		\end{equation}
		and for the limit rate,
		\begin{equation}
			\label{eq:cor-rho}
			(1-\rho^{2})\,\lmin\bigl(S_{DD}\Kh_{DD}S_{DD}\he\bigr)
			\le\lmin(L_\infty)
			\le\lmin\bigl(S_{DD}\Kh_{DD}S_{DD}\he\bigr),
		\end{equation}
		the upper bound being attained if and only if $\Kh_{CD}=0$.
	\end{corollary}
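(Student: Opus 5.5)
The argument splits into three parts: the factorization of the Schur complement, the two-sided Loewner bound, and the eigenvalue inequality together with its equality case.

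\emph{Factorization.} Since $\Kh\succ 0$, the diagonal blocks $\Kh_{CC}$ and $\Kh_{DD}$ are positive definite, so their Hermitian square roots exist. Using $\Kh_{DC}=\Kh_{CD}\he$, we have
\[
\Kh_{DC}\Kh_{CC}^{-1}\Kh_{CD}=\Kh_{DD}^{1/2}\,R\he R\,\Kh_{DD}^{1/2}.
\]
Inserting this into the definition \eqref{eq:schur-def} gives the first identity in \eqref{eq:schur-canonique}.

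\emph{Why $\rho<1$.} Because $\Schur{\Kh}{\Kh_{CC}}\succ 0$, which holds since $\Kh\succ0$, the factorization shows that $I-R\he R\succ 0$. Hence $\norm{R}_2<1$.

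\emph{Loewner bounds.} From $0\preceq R\he R\preceq\rho^{2}I$ we get
\[
(1-\rho^{2})I\preceq I-R\he R\preceq I.
\]
Congruence by $\Kh_{DD}^{1/2}$ preserves the Loewner order, which yields the sandwich in \eqref{eq:schur-canonique}. A second congruence, by $S_{DD}$, gives
\[
(1-\rho^{2})\,S_{DD}\Kh_{DD}S_{DD}\he\preceq L_\infty\preceq S_{DD}\Kh_{DD}S_{DD}\he.
\]
The minimal eigenvalue is monotone for the Loewner order, which proves \eqref{eq:cor-rho}.

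\emph{Equality case.} If $\Kh_{CD}=0$, then $L_\infty=S_{DD}\Kh_{DD}S_{DD}\he$, so the upper bound is attained trivially. Conversely, set $M=S_{DD}\Kh_{DD}S_{DD}\he$ and suppose $\lmin(L_\infty)=\lmin(M)$. Since
\[
M-L_\infty=S_{DD}\Kh_{DC}\Kh_{CC}^{-1}\Kh_{CD}S_{DD}\he\succeq0,
\]
equality of the two minimal eigenvalues only forces this gap to vanish on a minimizing eigenvector of $L_\infty$. It does not force $\Kh_{CD}=0$: with $n-m\ge2$ and $\Kh_{CD}$ of low rank, the correlated directions can sit away from the bottom eigenspace. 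So the literal ``if and only if'' needs additional structure.

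This equality case is the main obstacle. There are three options:
\begin{enumerate}[label=(\roman*)]
\item Treat the scalar diffusive case $|D|=1$. There $L_\infty$ and $M$ are scalars and $S_{DD}\neq 0$ (invertible). Equality then gives $\Kh_{DC}\Kh_{CC}^{-1}\Kh_{CD}=0$, hence $\Kh_{CD}=0$ because $\Kh_{CC}^{-1}\succ0$.
\item Read the equality in the stronger Loewner sense $L_\infty=M$, which forces $\Kh_{CD}=0$ by the same argument.
\item For general $D$, show that equality in the lower bound $\lmin$ characterizes nothing beyond this, and add a genericity hypothesis, namely that $\Kh_{CD}$ does not annihilate $S_{DD}\he v$ for the bottom eigenvector $v$ of $M$.
\end{enumerate}
I would present (ii) as the rigorous characterization, prove the scalar case (i) explicitly, and state the eigenvalue version under the genericity hypothesis of (iii).
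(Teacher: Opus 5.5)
Your factorization, the derivation of $\rho<1$ from $\Schur{\Kh}{\Kh_{CC}}\succ0$, the two-sided bound obtained from $(1-\rho^{2})I\preceq I-R\he R\preceq I$ by the two congruences, and the passage to $\lmin$ by Loewner monotonicity are the same as the paper's proof. The one real divergence is the equality case, and there your objection is correct. The paper settles it with the single sentence ``the equality case corresponds to $R=0$''. That proves only the matrix statement $\Schur{\Kh}{\Kh_{CC}}=\Kh_{DD}\iff\Kh_{CD}=0$, which is your option (ii) and also what Proposition~\ref{prop:archi} states. It does not prove the eigenvalue statement attached to \eqref{eq:cor-rho}. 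A concrete counterexample: take $n=3$, $C=\{1\}$, $S=I_3$ and
\[
\Kh=\begin{pmatrix}1&0&1\\0&1&0\\1&0&2\end{pmatrix}\succ0 .
\]
Then $\Schur{\Kh}{\Kh_{CC}}=I_2$ and $\Kh_{DD}=\diag(1,2)$, so $\lmin(L_\infty)=\lmin(S_{DD}\Kh_{DD}S_{DD}\he)=1$, although $\Kh_{CD}=(0\ \ 1)\neq0$ (here $\rho=1/\sqrt2$). Your repair via (ii), or via (i) when $|D|=1$, is the right one.

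Some small corrections to your write-up follow. In (iii) you mean equality in the upper bound of \eqref{eq:cor-rho}, not the lower bound. The condition you extract there, $\Kh_{CD}S_{DD}\he w=0$ for the bottom eigenvectors $w$ of $S_{DD}\Kh_{DD}S_{DD}\he$, is necessary for equality but not sufficient. So it gives the ``only if'' direction under your genericity assumption, not a characterization. In (i) and (ii), passing from the scalar equality or from $L_\infty=S_{DD}\Kh_{DD}S_{DD}\he$ to $\Kh_{CD}=0$ uses $S_{DD}$ invertible. That is not among the hypotheses of Theorem~\ref{thm:main}. Indeed, if $S_{DD}$ is singular, all three terms of \eqref{eq:cor-rho} vanish for every $\Kh_{CD}$, which is a second way the literal statement fails. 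The kernel-level equality $\Schur{\Kh}{\Kh_{CC}}=\Kh_{DD}$ needs no such assumption.
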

	
	\begin{proof}
		Since $\Kh\succ0$, the principal blocks $\Kh_{CC}$ and $\Kh_{DD}$ are positive
		definite. The Schur criterion gives
		\[
		I-R\he R\succ0,
		\]
		hence $\rho<1$. Moreover,
		\[
		\Schur{\Kh}{\Kh_{CC}}
		=
		\Kh_{DD}-\Kh_{DC}\Kh_{CC}^{-1}\Kh_{CD}
		=
		\Kh_{DD}^{1/2}(I-R\he R)\Kh_{DD}^{1/2},
		\]
		which establishes \eqref{eq:schur-canonique}. The bound follows from
		$(1-\rho^2)I\preceq I-R\he R\preceq I$, from congruence by $\Kh_{DD}^{1/2}$ and
		then $S_{DD}$, which preserve the Loewner order, and from the monotonicity of
		$\lmin$. Finally, the equality case corresponds to $R=0$, that is,
		$\Kh_{CD}=0$.
	\end{proof}
	
	The correlation between the convective and diffusive heads degrades the limit
	rate by a factor at most $1-\rho^{2}$, which decreases quadratically as
	$\rho\to 1$.
	
	\subsection{Condition number and P\'eclet threshold}
	\label{ssec:coro-cond}
	
	\begin{corollary}[Condition number]
		\label{cor:cond}
		Under the hypotheses of Theorem~\ref{thm:main} and of
		Corollary~\ref{cor:spectre}, setting
		$\Lambda:=\lmin\bigl(S_{DD}\Kh_{DD}S_{DD}\he\bigr)$,
		\begin{equation}
			\label{eq:cond-encadrement}
			\dfrac{\lmax(\Kh_{CC})\,b^{2}\xi^{2}}{\Lambda}\,\bigl(1+o(1)\bigr)
			\le\cond(\Gh)
			\le\dfrac{\lmax(\Kh_{CC})\,b^{2}\xi^{2}}{(1-\rho^{2})\,\Lambda}\,\bigl(1+o(1)\bigr),
		\end{equation}
		that is, $\Pe^{2}\lesssim\cond(\Gh)\lesssim\dfrac{\Pe^{2}}{1-\rho^{2}}$ with
		$\Pe=\dfrac{b}{\varepsilon_D\xi} \cdotp$
	\end{corollary}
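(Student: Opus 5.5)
The plan is to combine the two ends of the spectrum from Corollary~\ref{cor:spectre} with the two-sided bound of Corollary~\ref{cor:rho}. By definition $\cond(\Gh)=\lmax(\Gh)/\lmin(\Gh)$. For the numerator, \eqref{eq:lmax} gives $\lmax(\Gh)=b^{2}\xi^{2}\lmax(\Kh_{CC})(1+O(b^{-1}))$. For the denominator, \eqref{eq:lmin} gives $\lmin(\Gh)\to\lmin(L_\infty)>0$, so $\lmin(\Gh)=\lmin(L_\infty)(1+o(1))$. Hence
\[
\cond(\Gh)=\dfrac{\lmax(\Kh_{CC})\,b^{2}\xi^{2}}{\lmin(L_\infty)}\bigl(1+o(1)\bigr).
\]
The quotient of two quantities with relative errors $O(b^{-1})$ and $o(1)$ has relative error $o(1)$, because $\lmin(L_\infty)$ is a fixed positive number.

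Next I insert \eqref{eq:cor-rho}, namely $(1-\rho^{2})\Lambda\le\lmin(L_\infty)\le\Lambda$. Taking reciprocals reverses the inequalities:
\[
\frac{1}{\Lambda}\le\frac{1}{\lmin(L_\infty)}\le\frac{1}{(1-\rho^{2})\Lambda}.
\]
Multiplying through by $\lmax(\Kh_{CC})b^{2}\xi^{2}(1+o(1))$ gives exactly \eqref{eq:cond-encadrement}. Here $\Lambda>0$ because $S_{DD}$ is invertible and $\Kh_{DD}\succ0$.

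For the P\'eclet reformulation, I rewrite $b^{2}\xi^{2}=\Pe^{2}\varepsilon_D^{2}\xi^{4}$ using \eqref{eq:peclet-def}. The constants $\varepsilon_D^{2}\xi^{4}\lmax(\Kh_{CC})/\Lambda$ are independent of $b$, so they are absorbed into $\lesssim$ at fixed $(\omega,\xi)$, $E$, $C$, $\Kh$. This yields $\Pe^{2}\lesssim\cond(\Gh)\lesssim\Pe^{2}/(1-\rho^{2})$. One could additionally note that $\Lambda\gtrsim\varepsilon_D^{2}\xi^{4}$ when $S_{DD}$ is diffusion-dominated, which would make the hidden constant look natural. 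I would not claim this, however: $C_{DD}$ and $\omega$ enter $S_{DD}$, and the statement only asks for $\lesssim$ with constants independent of $b$.

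The only delicate point is the $o(1)$ bookkeeping. Corollary~\ref{cor:spectre} gives only convergence of $\lmin(\Gh)$, not a rate. So I must make sure the lower bound is stated with $(1+o(1))$, not $(1+O(b^{-1}))$. I must also make sure the division is legitimate, which holds because $\lmin(\Gh)$ stays bounded away from zero for $b$ large by \eqref{eq:lmin}. If one wanted a rate, Theorem~\ref{thm:main} together with the eigenvector localization $\norm{v^{b}_C}_2=O(b^{-1})$ would upgrade $o(1)$ to $O(b^{-1})$, but this is not needed for the stated result.
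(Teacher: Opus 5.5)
Your proof is correct and follows the paper's own argument: divide \eqref{eq:lmax} by \eqref{eq:lmin}, then insert the two-sided bound \eqref{eq:cor-rho} on $\lmin(L_\infty)$. Your extra points add detail the paper leaves implicit: the $o(1)$ bookkeeping (since \eqref{eq:lmin} gives only convergence, not a rate), the positivity of $\Lambda$, and the rewriting $b^{2}\xi^{2}=\Pe^{2}\varepsilon_D^{2}\xi^{4}$ with $b$-independent constants absorbed into $\lesssim$.
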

	
	\begin{proof}
		Quotient of \eqref{eq:lmax} by \eqref{eq:lmin}, then the bound
		\eqref{eq:cor-rho}.
	\end{proof}
	
	The condition number is expressed here at fixed frequency $\xi$, through the
	factor $b^{2}\xi^{2}$ and the P\'eclet number
	$\Pe=b/(\varepsilon_D\xi)$: the divergence is carried jointly by the
	convection intensity $b$ and the spatial frequency $\xi$, and a given $b$
	produces a large condition number only on the frequencies for which $b\xi$
	exceeds $\norm{S}_2$. This frequency dependence defines a P\'eclet threshold
	$\Pe^{\star}$. As long as $b\xi\lesssim\norm{S}_2$, the two spectral branches
	are of comparable order and $\cond(\Gh)=O(1)$; beyond the crossing
	$b\xi\sim\norm{S}_2$, that is,
	\begin{equation}
		\label{eq:seuil}
		\Pe^{\star}\asymp\dfrac{\norm{S(\omega,\xi)}_2}{\varepsilon_D\,\xi^{2}},
	\end{equation}
	the convective branch detaches like $b^{2}\xi^{2}$ while $\lmin$ saturates, and
	the regime $\cond\asymp\Pe^{2}$ sets in. This is the analogue, for the training
	dynamics, of the threshold beyond which stabilization becomes necessary in the
	finite element discretization of the same system.
	
	\subsection{Temporal frequencies}
	\label{ssec:coro-temps}
	
	Theorem~\ref{thm:main} applies to the parabolic symbol $S=E\xi^{2}+C+i\omega I$.
	The spatial frequency $\xi$, carried by the transport direction, is the vector
	of the screening; the temporal frequency $\omega$ escapes it and increases the
	limit rate.
	
	\begin{corollary}[Temporal frequencies]
		\label{cor:temps}
		Under the hypotheses of Theorem~\ref{thm:main}, with the parabolic symbol
		$S=E\xi^{2}+C+i\omega I$, and denoting by $S_{DD}^{0}=(E\xi^{2}+C)_{DD}$ the
		stationary diffusive block, one has:
		\begin{enumerate}[label=\textup{(\roman*)}]
			\item if $S_{DD}^{0}$ is real symmetric and $\Schur{\Kh}{\Kh_{CC}}=\kappa I$,
			the limit is additive in $\omega^{2}$:
			\begin{equation}
				\label{eq:temps-additif}
				L_\infty(\omega)=\kappa\bigl((S_{DD}^{0})^{2}+\omega^{2}I\bigr),
				\qquad
				\lmin\bigl(L_\infty(\omega)\bigr)=\kappa\bigl(\mu_{\min}^{2}+\omega^{2}\bigr),
			\end{equation}
			where $\mu_{\min}$ is the eigenvalue of $S_{DD}^{0}$ of smallest modulus;
			\item in the general case,
			$\lmin(L_\infty(\omega))\ge\sigma_{\min}(S_{DD}^{0}+i\omega I)^{2}\,\lmin(\Schur{\Kh}{\Kh_{CC}})$,
			a quantity that grows like $\omega^{2}\,\lmin(\Schur{\Kh}{\Kh_{CC}})$ as
			$\lvert\omega\rvert\to\infty$.
		\end{enumerate}
		No temporal frequency is therefore slowed down by the dominant convection; the
		pathology of the condition number is carried by the spatial modes at moderate
		$\omega$.
	\end{corollary}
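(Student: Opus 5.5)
The plan is to start from the limit identification of Theorem~\ref{thm:main}, namely $L_\infty(\omega)=S_{DD}(\omega)\,(\Schur{\Kh}{\Kh_{CC}})\,S_{DD}(\omega)\he$. Here $S_{DD}(\omega)=S_{DD}^{0}+i\omega I$, because $P$ vanishes on $D$ and the temporal term $i\omega I$ is diagonal, so its $DD$ block is $i\omega I_{n-m}$. Both items then reduce to computations on this explicit product. No further asymptotic argument in $b$ is needed: the statement concerns only the limit $L_\infty$, and its $b$-independence is already supplied by the theorem.

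For (i), I substitute $\Schur{\Kh}{\Kh_{CC}}=\kappa I$ and use that $S_{DD}^{0}$ is real symmetric. This gives $(S_{DD}^{0}+i\omega I)\he=S_{DD}^{0}-i\omega I$, hence
\[
L_\infty(\omega)=\kappa\,(S_{DD}^{0}+i\omega I)(S_{DD}^{0}-i\omega I)=\kappa\bigl((S_{DD}^{0})^{2}+\omega^{2}I\bigr),
\]
because the cross terms $\pm i\omega S_{DD}^{0}$ cancel; $S_{DD}^0$ commutes with $I$. Diagonalizing $S_{DD}^{0}$ orthogonally with real eigenvalues $\mu_j$, the eigenvalues of $L_\infty$ are $\kappa(\mu_j^{2}+\omega^{2})$. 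Their minimum is attained at the $\mu_j$ of smallest modulus, which gives \eqref{eq:temps-additif}. Note that $\kappa>0$ because $\Kh\succ0$ implies $\Schur{\Kh}{\Kh_{CC}}\succ0$.

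For (ii), I use the Rayleigh quotient. For a unit $x_D$, set $w=S_{DD}(\omega)\he x_D$. Then
\[
x_D\he L_\infty x_D=w\he(\Schur{\Kh}{\Kh_{CC}})w\ge\lmin(\Schur{\Kh}{\Kh_{CC}})\,\norm{w}_2^{2},
\]
and $\norm{w}_2\ge\sigma_{\min}(S_{DD}(\omega)\he)=\sigma_{\min}(S_{DD}^{0}+i\omega I)$. Taking the minimum over $x_D$ gives the bound. For the growth, Weyl's inequality for singular values gives $\sigma_{\min}(S_{DD}^{0}+i\omega I)\ge|\omega|-\norm{S_{DD}^{0}}_2$. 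Hence the lower bound is at least $(|\omega|-\norm{S^0_{DD}}_2)^2\lmin(\Schur{\Kh}{\Kh_{CC}})\sim\omega^{2}\lmin(\Schur{\Kh}{\Kh_{CC}})$. The concluding sentence then follows: at every $\omega$ the limit rate is bounded below by a quantity that is positive for large $|\omega|$ and increasing in $|\omega|$, and it is independent of $b$.

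The only delicate point is the hypothesis implicit in (ii) that $\Kh$ may itself depend on $\omega$. The stated growth must be read at fixed $\lmin(\Schur{\Kh}{\Kh_{CC}})$, or equivalently as the growth of the factor $\sigma_{\min}^{2}$. I would state this explicitly, and otherwise expect no real obstacle.
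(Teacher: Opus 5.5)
Your proposal is correct and is essentially the paper's own proof: in (i) the cross terms $\pm i\omega S_{DD}^{0}$ cancel by real symmetry, and in (ii) your Rayleigh-quotient estimate is the paper's congruence bound $L_\infty(\omega)\succeq\lmin(\Schur{\Kh}{\Kh_{CC}})\,S_{DD}(\omega)S_{DD}(\omega)\he$ written out, followed by the same Weyl inequality $\sigma_{\min}(S_{DD}^{0}+i\omega I)\ge\lvert\omega\rvert-\norm{S_{DD}^{0}}_2$. The other half of that Weyl inequality, $\sigma_{\min}(S_{DD}^{0}+i\omega I)\le\lvert\omega\rvert+\norm{S_{DD}^{0}}_2$, is what makes \emph{grows like} $\omega^{2}$ two-sided; both you and the paper leave this implicit.
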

	
	\begin{proof}
		For (i), $S_{DD}^{0}$ real symmetric gives
		$S_{DD}(\omega)S_{DD}(\omega)\he=(S_{DD}^{0}+i\omega I)(S_{DD}^{0}-i\omega I)=(S_{DD}^{0})^{2}+\omega^{2}I$,
		with spectrum $\{\mu_j^{2}+\omega^{2}\}$. For (ii),
		$L_\infty(\omega)\succeq\lmin(\Schur{\Kh}{\Kh_{CC}})\,S_{DD}(\omega)S_{DD}(\omega)\he$
		by congruence, and
		$\sigma_{\min}(S_{DD}^{0}+i\omega I)\ge\lvert\omega\rvert-\norm{S_{DD}^{0}}_2$.
	\end{proof}
	
	The hypothesis $\Schur{\Kh}{\Kh_{CC}}=\kappa I$ of case (i) is illustrative,
	not representative of the general case: it isolates the additive shift in
	$\omega^{2}$ in closed form by removing the anisotropy of the Schur complement
	of the kernel. The general case is governed by bound (ii). This result
	justifies the stationary reading of the preceding sections: in the frequency
	plane $(\omega,\xi)$, the pathological locus is the band of quasi-stationary
	modes $\omega=O(1)$, and the slice $\omega=0$ is there the most unfavorable
	case.
	
	\section{Consequences for optimization}
	\label{sec:optim}
	
	\subsection{Continuous flow and discrete descent}
	\label{ssec:optim-flot}
	
	The asymptotics \eqref{eq:lmin} does not contain $b$: the convergence rate of
	the slow mode is asymptotically independent of the convection intensity. Under
	continuous gradient flow \eqref{eq:flot-lin}, the component of the residual
	carried by the diffusive block decays like $e^{-\lmin t}$ with
	$\lmin\to\lmin(L_\infty)$: dominant convection does not slow down training in
	continuous time. The pathology appears upon discretizing the flow by a
	gradient descent with step $\eta$ shared among all components.
	
	\begin{proposition}[Training cost of the diffusive block]
		\label{prop:iters}
		Consider the descent $r^{j+1}=(I-\eta G)r^{j}$ on \eqref{eq:flot-lin}, with the
		step $\eta=\lmax(G)^{-1}$. Let $r^{0}$ be an initial residual with nonzero
		component on the slow mode. The number of iterations needed to reduce the
		diffusive component of the residual by a factor $\tau\in(0,1)$ satisfies
		\begin{equation}
			\label{eq:iters}
			j(\tau)\ge\cond(G)\,\log\dfrac{1}{\tau}\,\bigl(1+o(1)\bigr),
			\qquad
			\Pe^{2}\,\log\dfrac{1}{\tau}\lesssim j(\tau)\lesssim\dfrac{\Pe^{2}}{1-\rho^{2}}\,\log\dfrac{1}{\tau} \cdotp
		\end{equation}
	\end{proposition}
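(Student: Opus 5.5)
The plan is to diagonalize the iteration and follow only the slow mode. $G$ is Hermitian positive semidefinite; in the frequency-localized setting it is the block-diagonal family $\Gh(\omega,\xi)$, and $\lmin(G)>0$ by Corollary~\ref{cor:spectre}. Take an orthonormal eigenbasis $\{v_k\}$ with eigenvalues $\lambda_k$. The iteration then decouples as $r^{j}=\sum_k(1-\eta\lambda_k)^{j}\ip{r^{0}}{v_k}v_k$. With $\eta=\lmax(G)^{-1}$, every factor $1-\eta\lambda_k$ lies in $[0,1)$, so no mode oscillates or grows. The component on the slow eigenvector $v^{b}$ is multiplied at each step by exactly $1-1/\cond(G)$.

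The next step is to tie the diffusive component of the residual to this slow mode. By Corollary~\ref{cor:spectre}, $\norm{v^{b}_C}_2=O(b^{-1})$, so $v^{b}$ is asymptotically supported on the $D$ block. The diffusive component $Pr^{j}$ with $P$ replaced by $I-P$, i.e.\ $r^{j}_D$, therefore contains the term $(1-1/\cond)^{j}\ip{r^{0}}{v^{b}}v^{b}_D$, whose norm is $(1-1/\cond)^{j}|\ip{r^{0}}{v^{b}}|(1-O(b^{-2}))$. The other contributions to $r^{j}_D$ cannot cancel this term at leading order. To see this, project $r^{j}_D$ onto $v^{b}_D/\norm{v^{b}_D}$. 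The projection of the other eigencomponents is controlled by $\ip{v_k}{v^{b}}=0$, which gives $\ip{(v_k)_D}{v^{b}_D}=-\ip{(v_k)_C}{v^{b}_C}=O(b^{-1})$, and each such component also decays at least as fast. Hence reaching $\norm{r^{j}_D}\le\tau\norm{r^{0}_D}$ forces $(1-1/\cond)^{j}\lesssim\tau(1+o(1))$, up to the fixed constant $\norm{r^0_D}/|\ip{r^0}{v^b}|$, which is absorbed as $\tau\to0$ or read as a relative reduction on the slow mode. Taking logarithms and using $-\log(1-x)=x(1+O(x))$ with $x=1/\cond\to0$ gives $j(\tau)\ge\cond(G)\log(1/\tau)(1+o(1))$.

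The two-sided bound in terms of $\Pe^2$ comes from substituting Corollary~\ref{cor:cond} for $\cond(G)=\cond(\Gh)$. For the upper estimate, the same diagonalization shows that all modes with $\lambda_k\ge\lmin$ contract at least as fast as $1-1/\cond$. Hence $j=\cond\log(1/\tau)(1+o(1))$ iterations suffice, and combined with $\cond\lesssim\Pe^2/(1-\rho^2)$ this gives the right-hand inequality.

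The main obstacle is the second step: turning "diffusive component of the residual" into "slow-mode component" rigorously. The difficulty is that other eigenvectors also have $D$-parts. I would first handle it by the orthogonality argument above: restrict to the projection onto $v^{b}_D$ and use the $O(b^{-1})$ localization. If a mid-spectrum eigenvalue also stays $O(1)$, a $D$-localized eigenspace of dimension greater than one, that projection still isolates $v^{b}$. If it fails, I would fall back to interpreting the statement on the slow-mode coefficient itself, which is exact.
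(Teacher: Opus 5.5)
Your proposal is correct and follows essentially the paper's argument: you diagonalize the iteration, observe that the slow-mode coefficient contracts by exactly $1-\eta\lmin=1-\cond(G)^{-1}$ per step, use the localization of Corollary~\ref{cor:spectre} to identify that mode with the diffusive residual, and substitute the bounds of Corollary~\ref{cor:cond}. Your orthogonality estimate $\ip{(v_k)_D}{v^{b}_D}=-\ip{(v_k)_C}{v^{b}_C}=O(b^{-1})$ and your explicit sufficiency argument for the upper bound make rigorous two steps that the paper only asserts; both carry the additive-constant caveat (absorbed as $\tau\to0$), which you state and the paper leaves implicit.
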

	
	\begin{proof}
		On the orthonormal eigenbasis of $G$, the slow-mode component is multiplied by
		$1-\eta\lmin$ at each iteration; since
		$\eta\lmin=\dfrac{\lmin}{\lmax}=\cond(G)^{-1}$, one needs
		$j\ge\cond(G)\log\dfrac{1}{\tau}\,(1+o(1))$ iterations to reduce it by a factor
		$\tau$. By Corollary~\ref{cor:spectre}, the slow mode is asymptotically carried
		by the diffusive block. The bound follows from \eqref{eq:cond-encadrement}.
		The step $\eta=\lmax^{-1}$ is of optimal order: stability imposes
		$\eta< \dfrac{2}{\lmax}$, and any admissible $\eta$ gives
		$\eta\lmin\le 2\cond^{-1}$.
	\end{proof}
	
	The pathology is therefore a coupling between equations through the optimizer:
	the convective block imposes the step constraint $\eta\sim b^{-2}\xi^{-2}$,
	which the diffusive block, of proper rate $O(1)$, endures. This mechanism is
	distinct from the scalar spectral bias, which puts the frequencies within a
	single equation in competition; here the competition is between equations, at
	fixed frequency.
	
	\subsection{Selective weighting}
	\label{ssec:optim-pond}
	
	Theorem~\ref{thm:main} localizes the cause, the maximal eigenvalue carried by
	the convective block in $b^{2}\xi^{2}$. Introduce the weighted loss
	\begin{equation}
		\label{eq:perte-w}
		J_W(\theta)=\dfrac{1}{2}\sum_{k,i}w_i^{2}\,\bigl|r_{k,i}(\theta)\bigr|^{2},
		\qquad
		w_i=
		\begin{cases}
			(b\xi_0)^{-1}, & i\in C,\\
			1, & i\in D,
		\end{cases}
	\end{equation}
	with Gram matrix $\Gh_W=W\Ah\Kh\Ah\he W$ where
	$W=\diag\bigl((b\xi_0)^{-1}I_m,\,I_{n-m}\bigr)$.
	
	\begin{proposition}[Condition number under selective weighting]
		\label{prop:weight}
		Under the hypotheses of Theorem~\ref{thm:main}, with $S_{DD}$ invertible and
		$\xi=\xi_0$,
		\begin{equation}
			\label{eq:GW-lim}
			\Gh_W\xrightarrow[b\to\infty]{}\tilde{A}\,\Kh\,\tilde{A}\he,
			\qquad
			\tilde{A}=
			\begin{pmatrix}iI_m & 0\\ S_{DC} & S_{DD}\end{pmatrix},
		\end{equation}
		and $\tilde{A}$ is invertible. Consequently $\cond(\Gh_W)$ converges to a
		finite limit independent of $\Pe$.
	\end{proposition}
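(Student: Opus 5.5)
The plan is to compute $W\Ah$ explicitly and pass to the limit blockwise. Since $W$ is diagonal, $W\Ah$ is $\Ah$ with its $C$ rows multiplied by $(b\xi_0)^{-1}$. For $\xi=\xi_0$ we have $\Ah=S+ib\xi_0P$. The $C$ rows of $W\Ah$ are therefore
\[
\bigl((b\xi_0)^{-1}S_{CC}+iI_m,\ (b\xi_0)^{-1}S_{CD}\bigr)=(iI_m,\ 0)+O(b^{-1}).
\]
The $D$ rows $(S_{DC},\ S_{DD})$ do not depend on $b$, because $P$ vanishes on $D$. Hence
\[
W\Ah=\tilde A+O(b^{-1}),
\]
with $\tilde A$ as in \eqref{eq:GW-lim}. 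The convergence holds in operator norm, with an explicit error bounded by $\norm{S}_2/(b\xi_0)$.

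Next I take the product. Write $W\Ah=\tilde A+E_b$ with $\norm{E_b}_2=O(b^{-1})$. Then
\[
\Gh_W=(\tilde A+E_b)\Kh(\tilde A+E_b)\he=\tilde A\Kh\tilde A\he+O(b^{-1}),
\]
since $\Kh$ is fixed and every cross term is bounded by $\norm{\tilde A}_2\norm{\Kh}_2\norm{E_b}_2$ or smaller. This gives the convergence \eqref{eq:GW-lim}.

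For invertibility, $\tilde A$ is block lower triangular, so
\[
\det\tilde A=i^{m}\det S_{DD}\neq0
\]
by the hypothesis that $S_{DD}$ is invertible. It follows that $\tilde A\Kh\tilde A\he$ is Hermitian positive definite, since $\Kh\succ0$ and congruence by an invertible matrix preserves definiteness. Its extreme eigenvalues satisfy
\[
\lmin\ge\sigma_{\min}(\tilde A)^2\lmin(\Kh)>0,\qquad \lmax\le\norm{\tilde A}_2^2\lmax(\Kh).
\]

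The last step is the conclusion on the condition number, which I expect to be the only point needing care. Eigenvalues are continuous by Weyl's theorem, so
\[
|\lambda_j(\Gh_W)-\lambda_j(\tilde A\Kh\tilde A\he)|\le\norm{\Gh_W-\tilde A\Kh\tilde A\he}_2=O(b^{-1}).
\]
Because the limiting smallest eigenvalue is strictly positive, $\lmin(\Gh_W)$ stays bounded away from $0$ for $b$ large. Therefore $\cond(\Gh_W)\to\cond(\tilde A\Kh\tilde A\he)$. This limit involves only $S_{DC}$, $S_{DD}$ and $\Kh$, all evaluated at $\xi_0$ and independent of $b$, hence independent of $\Pe$.

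The main obstacle is simply ensuring that the limit is nondegenerate. This is exactly where the invertibility of $S_{DD}$ enters, through the triangular structure of $\tilde A$. Without it, $\lmin(\Gh_W)\to0$ and the ratio could diverge.
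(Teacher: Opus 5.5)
Your proof is correct and takes essentially the same route as the paper. Both arguments use the blockwise limit $W\Ah\to\tilde A$: the $C$ rows become $(iI_m,0)+O(b^{-1})$ and the $D$ rows are unchanged. Both then pass to the limit in $\Gh_W=(W\Ah)\Kh(W\Ah)\he$, get invertibility of $\tilde A$ from its block-triangular structure with diagonal blocks $iI_m$ and $S_{DD}$, and conclude by continuity of the extreme eigenvalues. Your version is only more quantitative, with the explicit $O(b^{-1})$ bound $\norm{S}_2/(b\xi_0)$ and Weyl's inequality.
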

	
	\begin{proof}
		For $\xi=\xi_0$, the $C$ rows of $W\Ah$ read
		\[
		(b\xi_0)^{-1}(S_{CC}+ib\xi_0 I_m\ \ S_{CD})
		=
		(iI_m+O(b^{-1})\ \ O(b^{-1})),
		\]
		while the $D$ rows remain unchanged,
		\[
		(S_{DC}\ \ S_{DD}).
		\]
		Thus,
		\[
		W\Ah\longrightarrow\tilde{A},
		\]
		and by continuity,
		\[
		\Gh_W\longrightarrow
		\tilde{A}\Kh\tilde{A}\he .
		\]
		The limit matrix $\tilde{A}$ is block triangular, with diagonal blocks $iI_m$
		and $S_{DD}$, so it is invertible. Since $\Kh\succ0$, one deduces
		\[
		\tilde{A}\Kh\tilde{A}\he\succ0 .
		\]
		Its condition number is therefore finite. Finally, the continuity of the
		extreme eigenvalues concludes.
	\end{proof}
	
	\begin{remark}[Choice of the weighting frequency]
		\label{rem:xi-choice}
		The weight \eqref{eq:perte-w} depends on a reference frequency $\xi_0$, which
		in a general problem is not known a priori, the residual carrying a whole
		spectrum of frequencies and not a single one.
		Proposition~\ref{prop:weight} is exact on the single frequency $\xi=\xi_0$; on
		the other frequencies the weighting is suboptimal but does not degrade the
		condition number, the convective eigenvalues being damped for every $\xi$. The
		natural choice is the frequency of the slowest mode, which by
		Corollary~\ref{cor:spectre} dominates the diffusive residual: on a bounded
		domain, it is the fundamental frequency of the Dirichlet Laplacian,
		$\xi_0= \dfrac{\pi}{\mathrm{diam}(\Omega)}$, that is, $\xi_0=\pi$ on the domain
		$(0,1)$. The weighting is then a fixed diagonal scaling, computable a priori,
		requiring of $b$ only its order of magnitude. Selective weighting is a remedy
		specific to shared-step descent; the architectural remedy of
		Section~\ref{sec:adam}, which involves no frequency, is preferable in
		practice.
	\end{remark}
	
	Under shared-step gradient descent, the weighting \eqref{eq:perte-w} therefore
	bounds the condition number uniformly in $\Pe$ and makes the cost
	\eqref{eq:iters} saturate. This weighting is nonetheless not the most relevant
	remedy in practice, because the optimizers commonly used are not shared-step
	descents, which the following section makes precise.
	
	\subsection{Architecture with disjoint subnetworks}
	\label{ssec:optim-archi}
	
	The second screening provides an architectural prescription. At fixed block
	$\Kh_{DD}$, the limit rate \eqref{eq:lmin} is maximal when the Schur complement
	of the kernel is maximal.
	
	\begin{proposition}[Optimality of the block-decoupled kernel]
		\label{prop:archi}
		For every Hermitian positive definite kernel $\Kh$,
		\begin{equation}
			\label{eq:archi}
			\Schur{\Kh}{\Kh_{CC}}\preceq\Kh_{DD},
		\end{equation}
		with equality if and only if $\Kh_{DC}=0$. Thus, at fixed $S$ and fixed
		diagonal blocks $\Kh_{CC}$ and $\Kh_{DD}$, the limit rate \eqref{eq:lmin} is
		maximal when the kernel is block-decoupled. Moreover, the relative loss
		induced by a shared trunk is controlled by the largest canonical correlation
		$\rho$ through the bound \eqref{eq:cor-rho}.
	\end{proposition}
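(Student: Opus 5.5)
The plan is to read everything off the explicit form of the Schur complement,
\[
\Kh_{DD}-\Schur{\Kh}{\Kh_{CC}}=\Kh_{DC}\,\Kh_{CC}^{-1}\,\Kh_{CD},
\]
which is immediate from the definition \eqref{eq:schur-def}. Since $\Kh\succ 0$, its principal block $\Kh_{CC}$ is Hermitian positive definite, and so is $\Kh_{CC}^{-1}$. For any $y_D$, setting $u=\Kh_{CD}y_D$ and using $\Kh_{CD}=\Kh_{DC}\he$ gives
\[
y_D\he\bigl(\Kh_{DD}-\Schur{\Kh}{\Kh_{CC}}\bigr)y_D=u\he\Kh_{CC}^{-1}u\ge 0 .
\]
This proves \eqref{eq:archi}. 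For the equality case, equality means $\Kh_{DC}\Kh_{CC}^{-1}\Kh_{CD}=0$. Writing this as $Z\he Z$ with $Z=\Kh_{CC}^{-1/2}\Kh_{CD}$, it forces $Z=0$, hence $\Kh_{CD}=0$ and $\Kh_{DC}=0$. The converse is trivial. Alternatively, the same conclusion follows from Corollary~\ref{cor:rho}: $\Schur{\Kh}{\Kh_{CC}}=\Kh_{DD}^{1/2}(I-R\he R)\Kh_{DD}^{1/2}$, and equality holds iff $R\he R=0$, i.e.\ $R=0$.

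Next comes the optimality of the rate. Congruence by $S_{DD}$ preserves the Loewner order, so
\[
L_\infty=S_{DD}\bigl(\Schur{\Kh}{\Kh_{CC}}\bigr)S_{DD}\he\preceq S_{DD}\Kh_{DD}S_{DD}\he .
\]
Since $\lmin$ is monotone for $\preceq$, $\lmin(L_\infty)\le\lmin(S_{DD}\Kh_{DD}S_{DD}\he)$. The right-hand side depends only on $S$ and $\Kh_{DD}$, and by the equality case it is attained by the block-decoupled kernel with the same diagonal blocks. That kernel, $\diag(\Kh_{CC},\Kh_{DD})$, is still positive definite, so it is admissible, and by \eqref{eq:lmin} it realizes the maximal limit rate among all kernels with the prescribed diagonal blocks. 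The relative-loss statement is then exactly the lower bound in \eqref{eq:cor-rho}: $\lmin(L_\infty)\ge(1-\rho^{2})\lmin(S_{DD}\Kh_{DD}S_{DD}\he)$, so the loss factor is at most $1-\rho^{2}$.

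The only delicate point is the meaning of \emph{maximal}. Loewner maximality of the matrix $\Schur{\Kh}{\Kh_{CC}}$ holds with equality iff $\Kh_{DC}=0$. Maximality of the scalar $\lmin$, however, may be attained by other kernels too, for example when $S_{DD}\Kh_{DD}S_{DD}\he$ has a degenerate smallest eigenspace orthogonal to the range of the correction. I would therefore state the uniqueness only at the matrix level, and claim only that the decoupled kernel attains the maximum of $\lmin$, not that it attains it uniquely. This is the main step I would handle carefully; the rest is routine.
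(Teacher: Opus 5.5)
Your proof is correct and is essentially the paper's: the paper also writes $\Schur{\Kh}{\Kh_{CC}}=\Kh_{DD}-Q\he Q$ with $Q=\Kh_{CC}^{-1/2}\Kh_{CD}$ (your $Z$). It obtains \eqref{eq:archi} and its equality case from $Q\he Q\succeq 0$ with $Q\he Q=0\iff Q=0$, then passes to the rate by congruence with $S_{DD}$ and monotonicity of $\lmin$, with the relative loss read off \eqref{eq:cor-rho}. Your caution about uniqueness at the level of $\lmin$ is also well founded: with $M=S_{DD}\Kh_{DD}S_{DD}\he$ one has $\lmin(L_\infty)=\lmin(M)$ exactly when $L_\infty\succeq\lmin(M)\,I$, and a nonzero $\Kh_{CD}$ can satisfy this (for instance when $M-L_\infty$ vanishes on the smallest eigenspace of $M$ and has norm at most the spectral gap of $M$, degenerate eigenspace or not), so the ``if and only if'' stated for the upper bound of \eqref{eq:cor-rho} in Corollary~\ref{cor:rho} holds for the Schur complement but not for the scalar $\lmin$.
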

	
	\begin{proof}
		One has $\Schur{\Kh}{\Kh_{CC}}=\Kh_{DD}-Q\he Q$ with
		$Q=\Kh_{CC}^{-1/2}\Kh_{CD}$, and $Q\he Q\succeq 0$, zero if and only if
		$\Kh_{CD}=0$. The monotonicity of $\lmin$ for the Loewner order and the
		congruence by $S_{DD}$ transfer \eqref{eq:archi} to the rate \eqref{eq:lmin}.
	\end{proof}
	
	The condition $\Kh_{DC}=0$ is realized, in the infinite-width limit, by
	subnetworks with disjoint parameters for the components of $C$ and those of
	$D$: the gradients with respect to independent parameters are uncorrelated, so
	the tangent kernel is block-decoupled. A shared trunk induces nonzero
	canonical correlations and pays the factor $1-\rho^{2}$ of
	Corollary~\ref{cor:rho}. Representation sharing, useful in statistical
	learning, is here detrimental to the training dynamics as soon as the system
	is heterogeneous. Architectures with per-component separated subnetworks are
	used in multiphysics and domain-decomposition physics-informed networks
	\cite{jagtap2020,moseley2023,heinlein2022}, and
	Proposition~\ref{prop:archi} provides a spectral justification of this choice
	in the heterogeneous setting. This prescription is also what allows an adaptive
	optimizer to neutralize the P\'eclet pathology, as the following section
	shows.
	
	\section{Interaction with the Adam optimizer}
	\label{sec:adam}
	
	Proposition~\ref{prop:iters} attributes the pathology to the shared descent
	step. Now physics-informed networks are commonly trained with the Adam
	optimizer, which applies an adaptive, coordinatewise scaling of the parameter
	vector. This scaling acts on the parameter space $\theta$ and not on the
	residual space; it therefore cannot directly diagonalize the Gram matrix
	$G=JJ\tr$, where $J= \dfrac{\partial r}{\partial\theta} \cdotp$ The question is
	to what extent Adam mitigates the pathology, and the role that the
	architecture plays in it.
	
	The analysis of the second screening provides the answer. With a shared trunk,
	each parameter influences all the components of the residual, and Adam's
	per-parameter scaling does not carry over to a per-component-block scaling.
	With disjoint subnetworks, on the contrary, the parameters are partitioned by
	component: the per-parameter scaling effectively becomes a per-block scaling,
	able to compensate the spectral imbalance between the convective block and the
	diffusive block. The architectural prescription of Proposition~\ref{prop:archi}
	and Adam's behavior then combine. This is an empirical observation motivated by
	the theory, not a theorem: we do not prove that Adam neutralizes the pathology,
	and the statement rests on numerical results. These show that, under Adam, the
	training cost of the diffusive block grows strongly with the P\'eclet number
	for a shared trunk, but remains almost stationary for disjoint subnetworks, the
	benefit of the separation appearing precisely beyond the P\'eclet threshold
	\eqref{eq:seuil}. The selective weighting of Section~\ref{ssec:optim-pond}
	retains its interest for shared-step gradient descent, but its effect is
	marginal under Adam; the separation of the subnetworks is the structural
	remedy. This architectural study is carried out in one space dimension, where
	the mechanism is isolated and robust; its transposition to systems of higher
	spatial dimension, where the spatial structure of the tangent kernel is
	superimposed on the per-component partition, constitutes an open question.
	
	\begin{algorithm}[H]
		\caption{Training by Adam with disjoint subnetworks}
		\label{alg:adam}
		\begin{algorithmic}[1]
			\State \textbf{Input:} step $\eta$, moments $\beta_1,\beta_2$, constant
			$\epsilon$, iterations $T$.
			\State Partition the parameters $\theta=(\theta_C,\theta_D)$ according to the
			subnetworks of the blocks $C$ and $D$.
			\State Initialize $\theta^{0}$, $m^{0}\gets 0$, $v^{0}\gets 0$.
			\For{$t=1,\dots,T$}
			\State $g^{t}\gets \nabla_\theta J(\theta^{t-1})$
			\Comment{gradient of the loss \eqref{eq:perte}}
			\State $m^{t}\gets \beta_1 m^{t-1}+(1-\beta_1)g^{t}$
			\State $v^{t}\gets \beta_2 v^{t-1}+(1-\beta_2)(g^{t})^{2}$
			\State $\widehat m\gets m^{t}/(1-\beta_1^{t})$,\quad
			$\widehat v\gets v^{t}/(1-\beta_2^{t})$
			\State $\theta^{t}\gets \theta^{t-1}
			-\eta\,\widehat m\,/\,(\sqrt{\widehat v}+\epsilon)$
			\Comment{per-parameter scaling, hence per-block}
			\EndFor
			\State \textbf{Output:} $u_{\theta^{T}}$.
		\end{algorithmic}
	\end{algorithm}
	
	\section{Localization}
	\label{sec:hp}
	
	We analyze the effect, on the global Gram matrix \eqref{eq:gram}, of the
	interaction between the size $h$ of the support of the test functions and the
	correlation length $\ell_K$ of the tangent kernel. The kernel is assumed
	isotropic and separable,
	\begin{equation}
		\label{eq:K-ell}
		\Theta_\infty(z,z')=\Kh_0\,k\,\left(\dfrac{\lvert z-z'\rvert}{\ell_K}\right),
		\qquad \Kh_0\in\R^{n\times n}\text{ symmetric positive definite},
	\end{equation}
	with $k$ continuous, $k(0)=1$, $0\le k\le 1$; the Gaussian case
	$k(s)=e^{-s^{2}/2}$ serves as a model.
	
	\begin{proposition}[Quasi block-diagonality]
		\label{prop:bloc}
		Assume $k(s)\le e^{-s^{2}/2}$ and the test functions $\{v_k\}$ with supports
		$S_k\subset Q$. Then, for all $(k,i)$, $(l,j)$,
		\begin{equation}
			\label{eq:bloc}
			\bigl|G_{(k,i),(l,j)}\bigr|
			\le\norm{\Kh_0}_2\,
			\exp\!\left(-\dfrac{\dist(S_k,S_l)^{2}}{2\ell_K^{2}}\right)
			\norm{\mathcal{A}^{*}(v_k e_i)}_{L^{1}}\norm{\mathcal{A}^{*}(v_l e_j)}_{L^{1}}.
		\end{equation}
		If the supports are laid out with a step $h$, the entries of $G$ decay in a
		Gaussian of the ratio $\dfrac{\lvert z_k-z_l\rvert}{\ell_K}$; for
		$h\gg\ell_K$, the matrix $G$ is, up to an exponentially small error, block
		diagonal by neighboring clusters, and the dynamics \eqref{eq:flot-lin}
		decouples into independent local dynamics.
	\end{proposition}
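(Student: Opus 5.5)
The plan is to start from the integral representation \eqref{eq:gram} of the entries of $G$ and estimate it directly. Set $w_k=\mathcal{A}^{*}(v_ke_i)$ and $w_l=\mathcal{A}^{*}(v_le_j)$. Since $\mathcal{A}^{*}$ is a differential operator and the test functions lie in $C_c^{\infty}(Q)$, we have $\operatorname{supp}w_k\subset S_k$ and $\operatorname{supp}w_l\subset S_l$. Substituting the separable kernel \eqref{eq:K-ell} gives
\[
G_{(k,i),(l,j)}=\int_{S_k}\int_{S_l} w_k(z)\tr\,\Kh_0\,w_l(z')\;k\Bigl(\dfrac{\lvert z-z'\rvert}{\ell_K}\Bigr)\,\mathrm{d}z'\,\mathrm{d}z .
\]
This uses only the definition of $\mathcal{K}$ together with the locality of $\mathcal{A}^{*}$, so that the double integral is restricted to $S_k\times S_l$.

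Next I bound the integrand pointwise. On $S_k\times S_l$ we have $\lvert z-z'\rvert\ge\dist(S_k,S_l)$. Because the map $s\mapsto e^{-s^{2}/2}$ is decreasing on $[0,\infty)$, the hypothesis $0\le k(s)\le e^{-s^{2}/2}$ gives $k(\lvert z-z'\rvert/\ell_K)\le\exp(-\dist(S_k,S_l)^{2}/(2\ell_K^{2}))$ uniformly on that set. The matrix factor is controlled by $\lvert w_k(z)\tr\Kh_0w_l(z')\rvert\le\norm{\Kh_0}_2\,\lvert w_k(z)\rvert\,\lvert w_l(z')\rvert$. Pulling the uniform exponential factor out of the integral, the double integral factorizes into $\norm{w_k}_{L^{1}}\norm{w_l}_{L^{1}}$, which is \eqref{eq:bloc}. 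Here the $L^1$ norm is understood with the Euclidean norm of the vector field. The vector $w_k$ has only the components produced by $\mathcal{A}^{*}$ acting on $v_ke_i$; the coupling $C\tr$ may spread it over several components, and the Euclidean-norm convention covers this case.

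For the qualitative statement, take supports of diameter comparable to $h$, centered at points $z_k$ of a lattice of step $h$. Then $\dist(S_k,S_l)\ge\lvert z_k-z_l\rvert-c\,h$, so the entries decay like a Gaussian in $\lvert z_k-z_l\rvert/\ell_K$, up to a shift by a fixed number of lattice steps.

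The main obstacle is turning entrywise decay into the claim that the dynamics decouple. I would group the test functions into clusters of neighbors, with all pairs in distinct clusters at distance at least $h$. I would then bound the off-cluster part $E$ of $G$ through Schur's test, $\norm{E}_2\le\max_{k}\sum_{l}\lvert G_{(k,i),(l,j)}\rvert$. The lattice sum $\sum_{m\ge1}m^{d}e^{-m^{2}h^{2}/(2\ell_K^{2})}$ is $O(e^{-h^{2}/(2\ell_K^{2})})$ when $h\gg\ell_K$. This bound requires the $L^{1}$ norms to be uniformly bounded, so I would normalize the test functions accordingly.

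Finally, a Duhamel or Grönwall comparison bounds $\norm{e^{-Gt}-e^{-G_{\mathrm{bd}}t}}_2\le t\norm{E}_2$, since both $G$ and its block-diagonal part $G_{\mathrm{bd}}$ are positive semidefinite. With this bound, \eqref{eq:flot-lin} decouples into local dynamics up to an exponentially small error.
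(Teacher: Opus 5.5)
Your proposal is correct and follows the paper's route. You restrict the double integral in \eqref{eq:gram} to $S_k\times S_l$, bound $k$ by the Gaussian evaluated at $\dist(S_k,S_l)$ and the matrix factor by $\norm{\Kh_0}_2$, and obtain the decoupling from the Duhamel estimate $\norm{e^{-Gt}-e^{-G_{\mathrm{bd}}t}}_2\le t\norm{E}_2$. Your extra steps fill in what the paper leaves implicit without changing the argument: Schur's test with a lattice sum to bound the off-cluster part in operator norm, uniformly bounded $L^1$ norms, and positive semidefiniteness of $G$ and $G_{\mathrm{bd}}$ so both semigroups are contractions.
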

	
	\begin{proof}
		By \eqref{eq:gram} and \eqref{eq:K-ell},
		\begin{equation*}
			\bigl|G_{(k,i),(l,j)}\bigr|
			\le\int_{S_k}\!\int_{S_l}
			\bigl|\mathcal{A}^{*}(v_ke_i)(z)\bigr|\,\norm{\Kh_0}_2\,
			k\!\left(\dfrac{\lvert z-z'\rvert}{\ell_K}\right)
			\bigl|\mathcal{A}^{*}(v_le_j)(z')\bigr|\,\mathrm{d}z'\,\mathrm{d}z,
		\end{equation*}
		and $k \left( \dfrac{\lvert z-z'\rvert}{\ell_K} \right)\le\exp\, \left(-\dfrac{\dist(S_k,S_l)^{2}}{2\ell_K^{2}} \right)$
		for $z\in S_k$, $z'\in S_l$. Bounding the remaining constant kernel gives
		\eqref{eq:bloc}. The decoupling follows from Duhamel's formula: if
		$G=G_{\mathrm{bd}}+R$ with $\norm{R}_2\le\epsilon$, then
		$\norm{e^{-Gt}-e^{-G_{\mathrm{bd}}t}}_2\le t\epsilon$ on any fixed time
		interval.
	\end{proof}
	
	\begin{proposition}[Rank collapse]
		\label{prop:rang}
		Let $Q$ be bounded and $\{v_k\}_{k=1}^{N}\subset C_c^{\infty}(Q)$. Under
		\eqref{eq:K-ell}, as $\ell_K\to\infty$ at fixed $Q$,
		\begin{equation}
			\label{eq:G-rang}
			G\longrightarrow G_\infty=M\,\Kh_0\,M\tr,
			\qquad
			M_{(k,i),\,c}=\int_Q\bigl(\mathcal{A}^{*}(v_ke_i)\bigr)_c\,\mathrm{d}z,
		\end{equation}
		and, the time-derivative, diffusion, and transport terms having zero integral
		for compactly supported test functions,
		\begin{equation}
			\label{eq:M-explicite}
			M_{(k,i),\,c}=\bigl(C\tr\bigr)_{c\,i}\int_Q v_k\,\mathrm{d}z.
		\end{equation}
		Consequently
		\begin{equation}
			\label{eq:rang}
			\rang(G_\infty)\le\min\bigl(n,\rang(C)\bigr),
		\end{equation}
		and the limit dynamics stagnates on a subspace of codimension at least
		$nN-\rang(C)$: any initial residual orthogonal to the range of $G_\infty$ is
		not trained.
	\end{proposition}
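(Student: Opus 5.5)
The plan is to pass to the limit directly in the integral representation \eqref{eq:gram} of the entries of $G$. Then the explicit form of $\mathcal{A}^{*}$ reduces $M$ to the coupling matrix alone.

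\emph{Step 1 (limit of the entries).} Set $w_{k,i}=\mathcal{A}^{*}(v_ke_i)$. Since $v_k\in C_c^{\infty}(Q)$, each $w_{k,i}$ is smooth with compact support in $Q$, hence in $L^{1}(Q)^{n}$. By \eqref{eq:gram} and \eqref{eq:K-ell},
\[
G_{(k,i),(l,j)}=\int_Q\!\int_Q w_{k,i}(z)\tr\,\Kh_0\,w_{l,j}(z')\,k\!\left(\dfrac{\lvert z-z'\rvert}{\ell_K}\right)\mathrm{d}z'\,\mathrm{d}z .
\]
Because $Q$ is bounded, $\lvert z-z'\rvert\le\mathrm{diam}(Q)$. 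By continuity of $k$ at $0$ with $k(0)=1$, the factor $k(\lvert z-z'\rvert/\ell_K)$ therefore converges to $1$ uniformly on $Q\times Q$. The integrand is dominated by $\norm{\Kh_0}_2\lvert w_{k,i}(z)\rvert\,\lvert w_{l,j}(z')\rvert$, which is integrable on $Q\times Q$, and $0\le k\le 1$. Dominated (or even uniform) convergence then gives
\[
G_{(k,i),(l,j)}\to\Bigl(\int_Q w_{k,i}\Bigr)\tr\Kh_0\Bigl(\int_Q w_{l,j}\Bigr).
\]
This is \eqref{eq:G-rang}, with $M$ having rows $\bigl(\int_Q w_{k,i}\bigr)\tr$. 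In fact the convergence holds with error $O\bigl(\sup_{s\le \mathrm{diam}(Q)/\ell_K}\lvert 1-k(s)\rvert\bigr)$, and since the dimension $nN$ is finite, entrywise convergence is convergence in norm.

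\emph{Step 2 (explicit $M$).} I would compute the formal adjoint
\[
\mathcal{A}^{*}w=-\partial_t w-\Div(E\nabla w)-b\,(\beta\cdot\nabla)Pw+C\tr w,
\]
using that $P$ is symmetric and $\beta$ is constant. Then
\[
w_{k,i}=\bigl(-\partial_t v_k-\varepsilon_i\Delta v_k\bigr)e_i-b\,(\beta\cdot\nabla v_k)\,Pe_i+v_k\,C\tr e_i .
\]
Each derivative term integrates to zero over $Q$ by the divergence theorem, since $v_k$ has compact support. Only $C\tr e_i\int_Q v_k$ survives, which is \eqref{eq:M-explicite}.

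\emph{Step 3 (rank and stagnation).} Put $a_k=\int_Q v_k$. With rows indexed by $(k,i)$, $M=a\otimes C$: the row of $M$ at $(k,i)$ is $a_k$ times the $i$-th row of $C$. Hence $\rang(M)\le\rang(C)\le n$, and $\rang(G_\infty)\le\rang(M)\le\min(n,\rang(C))$, which is \eqref{eq:rang}. The matrix $G_\infty$ is symmetric positive semidefinite, so its kernel is the orthogonal complement of its range and has dimension at least $nN-\rang(C)$. For $r(0)$ in this kernel, $e^{-G_\infty t}r(0)=r(0)$ for all $t$, so the residual is not trained.

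The main obstacle is modest. It is the adjoint bookkeeping in Step 2: checking the sign and placement of $P$ in the transport term, and confirming that no boundary contribution survives. This is guaranteed because every $v_k$ vanishes near $\partial Q$, so all boundary integrals in the divergence theorem are zero.
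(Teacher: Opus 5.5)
Your proof is correct and follows essentially the same route as the paper: uniform convergence of $k(\lvert z-z'\rvert/\ell_K)$ to $1$ on the bounded cylinder to pass to the limit in \eqref{eq:gram}, the explicit formal adjoint with the divergence theorem removing the derivative terms, and the rank bound from the structure of $M$. Your additions are only more explicit bookkeeping of that argument: the dominated-convergence bound, the description of row $(k,i)$ of $M$ as $a_k$ times the $i$-th row of $C$, and reading the stagnation claim as $\dim\ker G_\infty\ge nN-\rang(C)$.
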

	
	\begin{proof}
		On bounded $Q$,
		\[
		\sup_{z,z'\in Q}\left|k \left( \dfrac{\lvert z-z'\rvert}{\ell_K} \right)-1\right|\to 0
		\qquad(\ell_K\to\infty),
		\]
		hence $\Theta_\infty\to\Kh_0$ uniformly. It follows, by \eqref{eq:gram}, that
		\[
		G_{(k,i),(l,j)}
		\longrightarrow
		(M\Kh_0M\tr)_{(k,i),(l,j)}.
		\]
		The formal space-time adjoint is
		$\mathcal{A}^{*}w=-\partial_t w-\Div(E\nabla w)-b(\beta\cdot\nabla)Pw+C\tr w$.
		Taking $w=v_ke_i$ with $v_k\in C_c^\infty(Q)$, the terms
		$\partial_t(v_ke_i)$, $\Div(E\nabla(v_ke_i))$ and
		$(\beta\cdot\nabla)(v_kPe_i)$ integrate to zero by the divergence theorem.
		There remains
		\[
		\int_Q C\tr(v_ke_i)
		=
		(C\tr e_i)\int_Q v_k,
		\]
		which gives \eqref{eq:M-explicite}. Finally, the columns of $M$ span a space of
		dimension at most $\rang(C)$; thus $\rang(G_\infty)\le\min(n,\rang(C))$, and the
		dynamics is zero on $\ker(G_\infty)=\operatorname{Im}(G_\infty)^\perp$.
	\end{proof}
	
	\begin{remark}[Calibration $h\sim\ell_K$]
		\label{rem:hp}
		Propositions~\ref{prop:bloc} and~\ref{prop:rang} delimit the choice of
		refinement: if $h\gg\ell_K$, the resolution is underexploited; if
		$h\ll\ell_K$, the rank degrades. The relevant regime is therefore
		$h\sim\ell_K$, which provides an a priori calibration rule for the test mesh
		from the correlation length of the kernel. It also highlights the need for
		sufficiently localized kernels in order to train a large network: beyond
		$\rang(G)$ test functions, the additional residuals can no longer be trained.
	\end{remark}
	
	\section{Numerical validation}
	\label{sec:num}
	
	The reference system is the smallest heterogeneous system with nontrivial
	coupling, a chain of exchanges with $n=3$ components of fluid--wall--insulator
	type, with $E=\diag\bigl(10^{-2},10^{-2},2\times10^{-2}\bigr)$ and
	\begin{equation}
		\label{eq:sys3}
		C=\{1\},\qquad D=\{2,3\},\qquad
		C=\begin{pmatrix}
			\alpha & -\alpha & 0\\
			-\alpha & \alpha+\gamma & -\gamma\\
			0 & -\gamma & \gamma
		\end{pmatrix},
	\end{equation}
	where $\alpha=1$, $\gamma=\tfrac12$, $\xi=\pi$, and $\omega=0$ unless otherwise
	stated. We introduce the P\'eclet number $\Pe=b/(\varepsilon_D\xi)$ with
	$\varepsilon_D=10^{-2}$. The first component is convected, the two others are
	purely diffusive; the exchange matrix has rank $2$ and the two diffusive
	components are heterogeneous.
	
	The bare diffusive block is
	\[
	S_{DD}=
	\begin{pmatrix}
		1.5987 & -0.5\\
		-0.5 & 0.6974
	\end{pmatrix},
	\]
	and, for $\Kh=I$, the limit \eqref{eq:main} has spectrum
	$(0.225555,\ 3.31663)$.
	
	Three numerical settings are considered: the exact symbol
	$\Gh=\Ah\Kh\Ah\he$, evaluated in double precision; a discrete setting on
	$\Omega=(0,1)$, without translation invariance, using cubic B-spline test
	functions $\{v_k\}_{k=1}^{N}$ ($N=16$ per component), with interior supports
	and exact derivatives, with the Gaussian kernel
	$\Theta(x,y)=\Kh_0e^{-\lvert x-y\rvert^2/(2\ell_K^2)}$ where
	$\ell_K=0.05$, the integrals of \eqref{eq:gram} being evaluated by a quadrature
	with $2001$ points, which leads to a matrix $G\in\R^{48\times48}$ assembled
	exactly, without training; and finally random systems of $n\le6$ equations with
	dense couplings. The scripts allowing the figures and tables to be reproduced
	are freely available \cite{zenodo}.
	
	\subsection{Spectral asymptotics of the symbol}
	\label{ssec:num-symbole}
	
	Figure~\ref{fig:symbole} shows $\lmin(\Gh)$, $\lmax(\Gh)$ and $\cond(\Gh)$ as
	functions of $\Pe$ for the system \eqref{eq:sys3} with $\Kh=I$. The predictions
	of Corollaries~\ref{cor:spectre} and \ref{cor:cond} are verified: $\lmin(\Gh)$
	saturates toward
	\[
	\lmin(S_{DD}S_{DD}\tr)=0.225555,
	\]
	while $\lmax(\Gh)$ follows the asymptote $b^{2}\xi^{2}$. The condition number
	exhibits quadratic growth in $\Pe$ beyond the threshold \eqref{eq:seuil},
	materialized by the kink observed on the two branches. Finally, the eigenvector
	associated with the slow mode localizes in the diffusive block, in accordance
	with \eqref{eq:lmax}--\eqref{eq:lmin}.
	
	\begin{figure}[H]
		\centering
		\includegraphics[width=\linewidth]{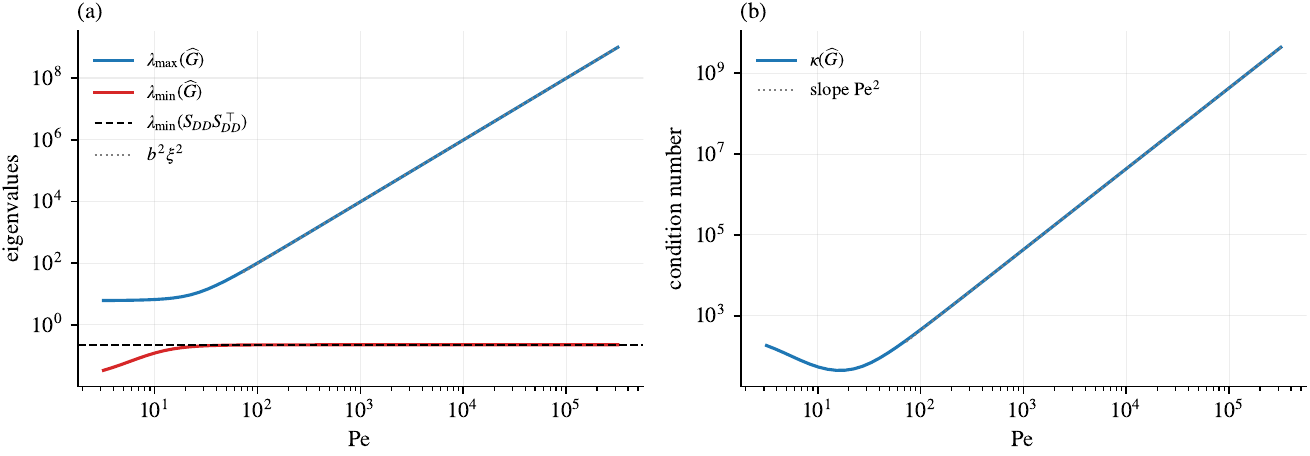}
		\caption{Exact symbol, system \eqref{eq:sys3}, $\Kh=I$. Left: eigenvalues of
			$\Gh$ as functions of $\Pe$, with saturation of $\lmin$ on
			$\lmin(S_{DD}S_{DD}\tr)$ (dashes) and convective asymptote $b^{2}\xi^{2}$
			(dots). Right: condition number, of slope $\Pe^{2}$ beyond the threshold.}
		\label{fig:symbole}
	\end{figure}
	
	\subsection{Discrete setting}
	\label{ssec:num-discret}
	
	The discrete setting satisfies none of the hypotheses of the frequency
	reduction: bounded domain, compactly supported test functions, and truncated
	kernel. Table~\ref{tab:cond} and Figure~\ref{fig:discret} show, however, that
	the scaling laws persist: $\lmin(G)=2.965\times10^{-3}$ remains constant over
	three decades of P\'eclet, $\cond(G)$ grows by a factor of $100$ per decade
	beyond the threshold, and the mass of the slow eigenvector carried by the
	diffusive block reaches unity as soon as $\Pe=10^{3}$. The saturation value of
	$\lmin$ differs from that of the symbol, since it results from the aggregation
	of the frequencies present in the B-spline basis; on the other hand, its
	independence of $b$, predicted by the theorem, is verified.
	
	\begin{table}[H]
		\centering
		\caption{Discrete setting ($N=16$ cubic B-splines per component, Gaussian
			kernel $\ell_K=0.05$): spectrum of $G\in\R^{48\times 48}$ as a function of
			$\Pe$, system \eqref{eq:sys3}. The last column gives the mass of the slow
			eigenvector on the diffusive block.}
		\label{tab:cond}
		\begin{tabular}{rrrrr}
			\toprule
			$\Pe$ & $\lmin(G)$ & $\lmax(G)$ & $\cond(G)$ & $\norm{(v_{\min})_D}_2^{2}$\\
			\midrule
			$10^{2}$ & $2.834\times 10^{-3}$ & $1.14$ & $4.04\times 10^{2}$ & $0.9962$\\
			$10^{3}$ & $2.964\times 10^{-3}$ & $1.07\times 10^{2}$ & $3.60\times 10^{4}$ & $1.0000$\\
			$10^{4}$ & $2.965\times 10^{-3}$ & $1.07\times 10^{4}$ & $3.59\times 10^{6}$ & $1.0000$\\
			$10^{5}$ & $2.965\times 10^{-3}$ & $1.07\times 10^{6}$ & $3.59\times 10^{8}$ & $1.0000$\\
			\bottomrule
		\end{tabular}
	\end{table}
	
	\begin{figure}[H]
		\centering
		\includegraphics[width=\linewidth]{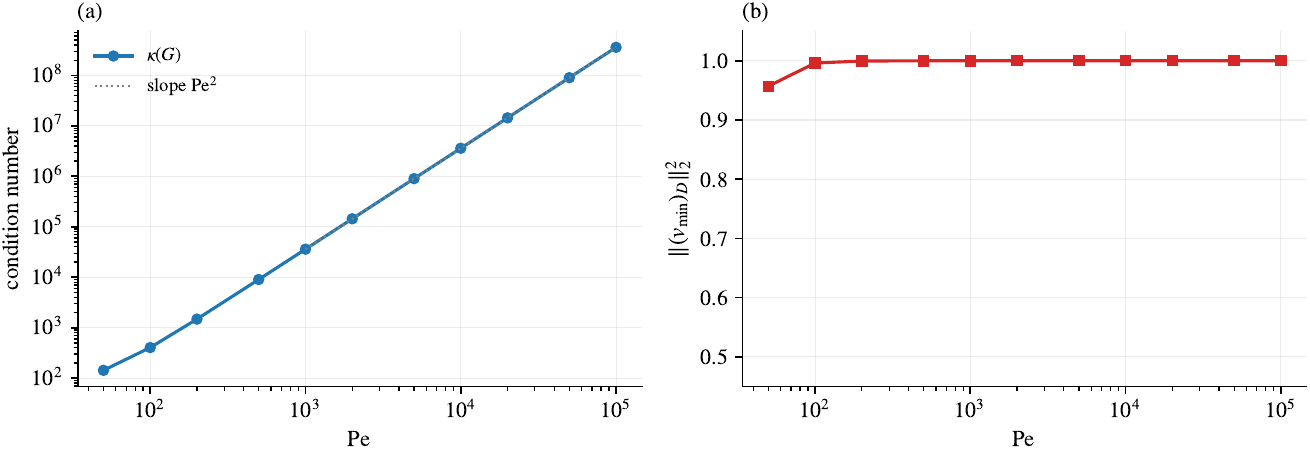}
		\caption{Discrete setting. Left: condition number of $G$, of slope $\Pe^{2}$.
			Right: mass of the slow eigenvector on the diffusive block, with complete
			localization as soon as $\Pe\gtrsim 10^{3}$.}
		\label{fig:discret}
	\end{figure}
	
	\subsection{Two-dimensional exchanger}
	\label{ssec:num-2d}
	
	To verify that the mechanism is not specific to one dimension, we consider an
	exchanger on $\Omega=(0,1)^2$ made of two fields coupled by an exchange
	$\alpha(u_1-u_2)$ with $\alpha=1$. The first field is advected in the direction
	$\beta=(1,1)/\sqrt{2}$, while the second is purely diffusive, with
	$\varepsilon_1=\varepsilon_2=10^{-2}$. The symbol is evaluated on the Fourier
	modes $k=(m\pi,n\pi)$, $1\le m,n\le8$, associated with the Dirichlet Laplacian,
	and the P\'eclet number is referred to the reference frequency $\xi=\pi$.
	
	For each mode, the screening theorem is verified to machine precision: for
	$k=(2\pi,3\pi)$ and $b=10^{6}$, the value of $\lmin(\Gh)$ coincides with the
	prediction
	\[
	(\varepsilon_2|k|^2+\alpha)^2
	\]
	to within $3.1\times10^{-14}$. Figure~\ref{fig:2d} shows the geometric mean of
	the condition number over the $64$ modes as a function of the P\'eclet number.
	The $\Pe^2$ growth announced by Corollary~\ref{cor:cond} is recovered, with a
	factor of $100.0$ per decade beyond the threshold. The spectral mechanism
	highlighted thus appears independent of the spatial dimension.
	
	Figure~\ref{fig:sol2d} illustrates a solution of this system, obtained by
	finite differences. The convective component $u_1$ exhibits a marked asymmetry
	in the transport direction $\beta$, while the diffusive component $u_2$, coupled
	to $u_1$ only through the exchange term $\alpha$, remains smooth and symmetric.
	This contrast between an advected component and a diffusive component is the
	structural signature of the heterogeneous systems under study.
	
	\begin{figure}[H]
		\centering
		\includegraphics[width=\linewidth]{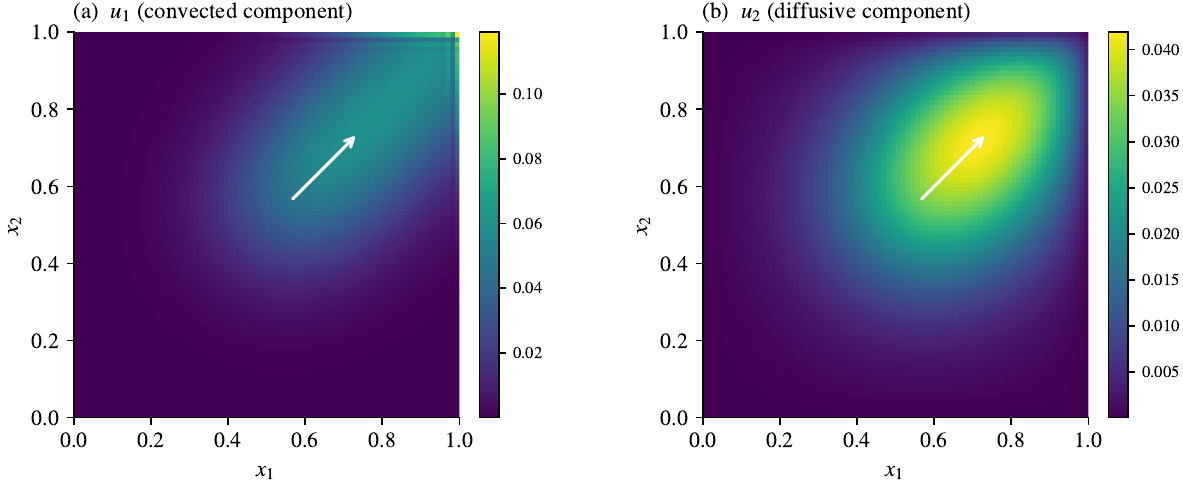}
		\caption{Solution of the exchanger system on $(0,1)^{2}$ for moderate $\Pe$,
			obtained by finite differences. Left: convective component $u_1$, the arrow
			indicating the transport direction $\beta$. Right: diffusive component
			$u_2$. The coupling $\alpha$ transfers energy from $u_1$ to $u_2$ while
			preserving the regularity of the latter.}
		\label{fig:sol2d}
	\end{figure}
	
	\begin{figure}[H]
		\centering
		\includegraphics[width=0.58\linewidth]{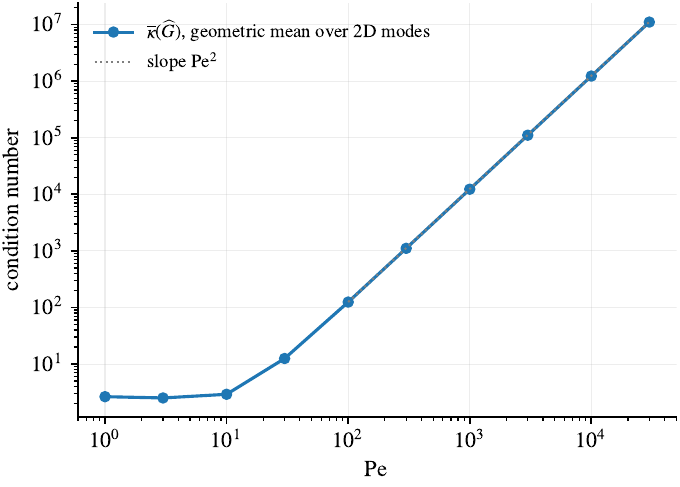}
		\caption{Two-dimensional exchanger on $(0,1)^{2}$, exact symbol: mean
			condition number of $\Gh$ over the $64$ Fourier modes as a function of $\Pe$,
			with the slope $\Pe^{2}$ (dots).}
		\label{fig:2d}
	\end{figure}
	
	\subsection{The double screening theorem for $n$ components}
	\label{ssec:num-ncomp}
	
	For $(n,m)\in\{(3,1),(4,1),(5,2),(6,3)\}$, we build $S=R+2.5\,I_n$, where $R$
	has independent Gaussian entries, and $\Kh=BB\tr+0.3\,I_n$, with $B$ Gaussian.
	These choices produce dense couplings and kernels devoid of particular
	structure.
	
	Figure~\ref{fig:ncomp} shows the relative spectral error between
	$\Schur{\Gh}{\Gh_{CC}}$ and the limit
	$S_{DD}(\Schur{\Kh}{\Kh_{CC}})S_{DD}\tr$ as a function of $b$. The observed
	convergence is in $O(b^{-2})$, in accordance with Remark~\ref{rem:ordre}, and
	continues down to the round-off-dominated regime for $b=10^{7}$.
	
	\begin{figure}[H]
		\centering
		\includegraphics[width=0.62\linewidth]{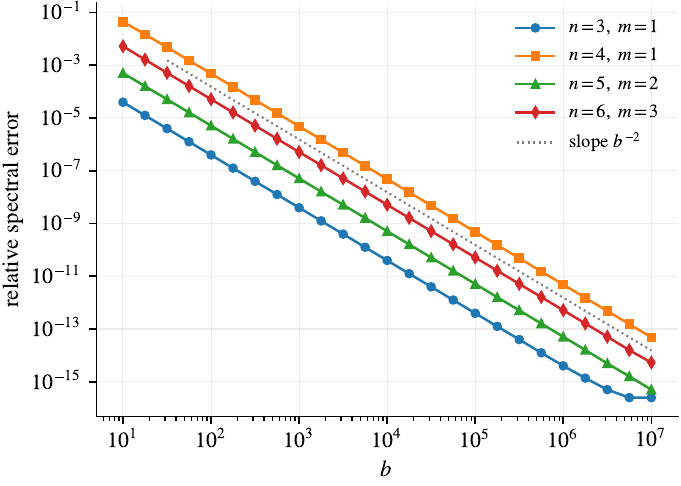}
		\caption{Theorem~\ref{thm:main} for dense random systems,
			$(n,m)\in\{(3,1),(4,1),(5,2),(6,3)\}$: relative spectral error between
			$\Schur{\Gh}{\Gh_{CC}}$ and its limit, as a function of $b$, of slope
			$O(b^{-2})$.}
		\label{fig:ncomp}
	\end{figure}
	
	\subsection{The canonical-correlation law}
	\label{ssec:num-rho}
	
	We parametrize a family of kernels associated with the system \eqref{eq:sys3}
	by their largest canonical correlation. We choose $\Kh_{CC}=1$,
	\[
	\Kh_{DD}=
	\begin{pmatrix}
		1 & 0.3\\
		0.3 & 1
	\end{pmatrix},
	\]
	and $\Kh_{CD}=\rho\,c\tr$, where the direction $c$ is normalized so as to
	satisfy
	\[
	\norm{\Kh_{CC}^{-1/2}\Kh_{CD}\Kh_{DD}^{-1/2}}_2=\rho.
	\]
	Figure~\ref{fig:rho} compares, for $\rho\in[0,\ 0.995]$ and $b=10^{6}$, the
	value of $\lmin(\Gh)$, the exact limit given by Theorem~\ref{thm:main}, and the
	lower bound $1-\rho^2$ of Corollary~\ref{cor:rho}. The relative gap between the
	computed value and the exact limit remains below $6.1\times10^{-14}$. The
	slowdown factor $(1-\rho^2)^{-1}$ reaches about $5.3$ for $\rho=0.9$ and $50.3$
	for $\rho=0.99$.
	
	\begin{figure}[H]
		\centering
		\includegraphics[width=0.58\linewidth]{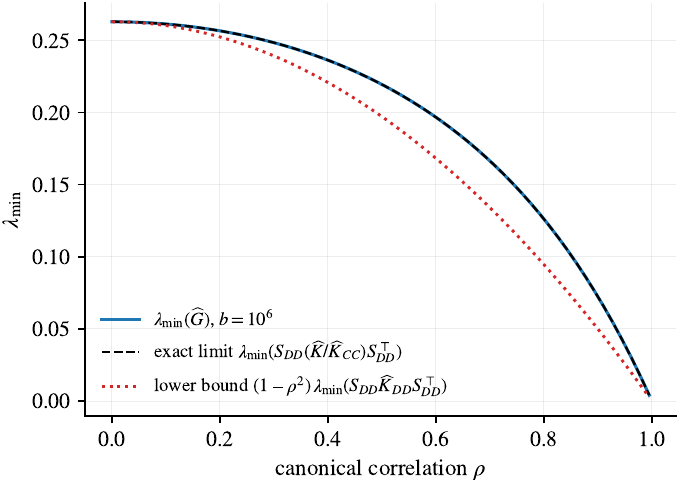}
		\caption{Screening of the kernel, system \eqref{eq:sys3}: $\lmin(\Gh)$ at
			$b=10^{6}$ (solid line), exact limit of Theorem~\ref{thm:main} (dashes) and
			bound $1-\rho^{2}$ of Corollary~\ref{cor:rho} (dots), as a function of the
			canonical correlation $\rho$.}
		\label{fig:rho}
	\end{figure}
	
	\subsection{Temporal shift in the parabolic case}
	\label{ssec:num-temps}
	
	Table~\ref{tab:temps} verifies Corollary~\ref{cor:temps}. For the system
	\eqref{eq:sys3} with $S_{DD}^{0}$ real symmetric and $\Kh=I$, the eigenvalues of
	the Schur complement for $b=10^{7}$ follow the additive shift predicted by
	\eqref{eq:temps-additif}, with an error below $5\times10^{-9}$. For a dense
	system $n=5$, $m=2$ with $\omega=3$, where $S$ is complex and non-normal, the
	general limit of Theorem~\ref{thm:main} is recovered with a relative error of
	$5.5\times10^{-8}$. As $\omega$ increases, the convergence rate improves; no
	additional degradation related to temporal frequencies is observed.
	
	\begin{table}[H]
		\centering
		\caption{Parabolic case, $b=10^{7}$: eigenvalues of $\Schur{\Gh}{\Gh_{CC}}$ and
			prediction of Theorem~\ref{thm:main} with $S=E\xi^{2}+C+i\omega I$.}
		\label{tab:temps}
		\begin{tabular}{llll}
			\toprule
			System & $\omega$ & computed eigenvalues & rel. error\\
			\midrule
			\eqref{eq:sys3} & $0$ & $(0.225555,\ 3.31663)$ & $1.6\times 10^{-15}$\\
			\eqref{eq:sys3} & $2$ & $(4.225555,\ 7.31663)$ & $1.5\times 10^{-8}$\\
			\eqref{eq:sys3} & $10$ & $(100.225555,\ 103.31663)$ & $5.1\times 10^{-9}$\\
			dense $n=5$, $m=2$ & $3$ & $(9.70795,\ 25.93154,\ 176.16092)$ & $5.5\times 10^{-8}$\\
			\bottomrule
		\end{tabular}
	\end{table}
	
	\subsection{Gradient descent and selective weighting}
	\label{ssec:num-iters}
	
	In the discrete setting, we integrate the descent
	\[
	r^{j+1}=(I-\eta G)r^j,
	\qquad
	\eta=\lmax(G)^{-1},
	\]
	starting from a residual carried by the diffusive block, and we measure the
	number of iterations needed to reach
	\[
	\dfrac{\norm{r_D^j}}{\norm{r_D^0}}<10^{-3}.
	\]
	The weight uses the fundamental frequency $\xi_0=\pi$ of the domain $(0,1)$, in
	accordance with Remark~\ref{rem:xi-choice}. Table~\ref{tab:iters} and
	Figure~\ref{fig:iters} compare the unweighted dynamics with the one weighted by
	\eqref{eq:perte-w}. Without weighting, the cost follows the $\Pe^2$ law of
	Proposition~\ref{prop:iters}, with a measured ratio of $100.4$ between
	$\Pe=10^3$ and $\Pe=10^4$. With selective weighting, the cost saturates at
	$701$ iterations, in accordance with Proposition~\ref{prop:weight}, with a gain
	already observable before the threshold. This plateau corresponds to effective
	convergence: the tolerance $10^{-3}$ is reached in $701$ iterations for every
	$\Pe\ge5\times10^2$, while the residual keeps decreasing beyond. The constancy
	of the number of iterations thus reflects the limit condition number
	independent of $\Pe$ established in Proposition~\ref{prop:weight}.
	
	\begin{table}[H]
		\centering
		\caption{Gradient-descent iterations ($\eta=\lmax^{-1}$) to reduce the
			diffusive residual by a factor $10^{3}$, discrete setting: raw dynamics and
			selective weighting \eqref{eq:perte-w} with $\xi_0=\pi$.}
		\label{tab:iters}
		\begin{tabular}{rrrr}
			\toprule
			$\Pe$ & raw & weighted & speedup\\
			\midrule
			$10^{2}$ & $2\,755$ & $753$ & $3.7$\\
			$5\times 10^{2}$ & $60\,945$ & $701$ & $87$\\
			$10^{3}$ & $257\,291$ & $701$ & $367$\\
			$5\times 10^{3}$ & $6\,116\,267$ & $701$ & $8.7\times 10^{3}$\\
			$10^{4}$ & $25\,820\,765$ & $701$ & $3.7\times 10^{4}$\\
			\bottomrule
		\end{tabular}
	\end{table}
	
	\begin{figure}[H]
		\centering
		\includegraphics[width=0.62\linewidth]{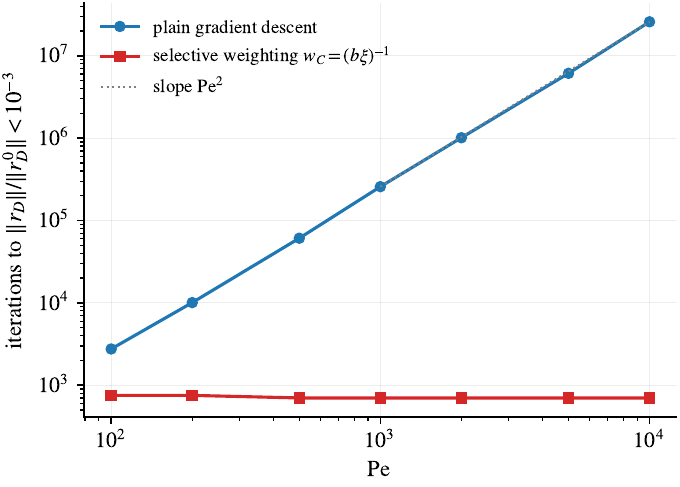}
		\caption{Training cost of the diffusive components as a function of $\Pe$,
			system \eqref{eq:sys3}: raw dynamics, of slope $\Pe^{2}$, and selective
			weighting, with saturation.}
		\label{fig:iters}
	\end{figure}
	
	\subsection{Localization}
	\label{ssec:num-hp}
	
	Figure~\ref{fig:hp} verifies Propositions~\ref{prop:bloc} and \ref{prop:rang}.
	Left, the decay of the off-diagonal entries of $G$ as a function of the
	separation of the supports follows the Gaussian envelope in $\ell_K$ predicted
	by \eqref{eq:bloc}, over twelve orders of magnitude. Right, as $\ell_K$ grows
	from $0.05$ to $10$, the normalized spectrum of $G$ concentrates around an
	effective rank equal to that of the exchange matrix, $\rang(C)=2$ for the chain
	\eqref{eq:sys3}. For $\ell_K=10$, the first two normalized eigenvalues equal
	$1$ and $7.2\times10^{-2}$, while the third decreases to $9.3\times10^{-4}$, in
	agreement with the corrections in
	$O\bigl((\operatorname{diam}Q/\ell_K)^2\bigr)$. Thus, in most of the $48$
	directions, the additional test functions become redundant, in accordance with
	the bound \eqref{eq:rang}.
	
	\begin{figure}[H]
		\centering
		\includegraphics[width=\linewidth]{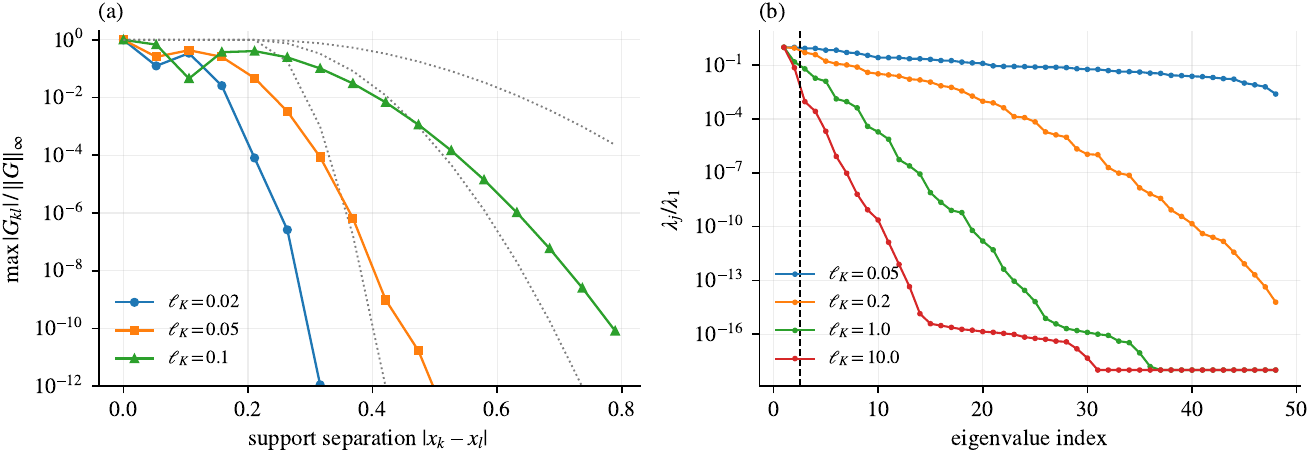}
		\caption{Localization. Left: off-diagonal decay of the entries of $G$ as a
			function of the separation of the supports, with predicted Gaussian
			envelopes (dots). Right: normalized spectrum of $G$ for increasing $\ell_K$,
			with collapse toward the rank of the coupling matrix.}
		\label{fig:hp}
	\end{figure}
	
	\subsection{Finite-width networks}
	\label{ssec:num-largeur}
	
	We verify that the predictions persist for finite-width networks actually
	trained: single-hidden-layer perceptrons of width $M$, hyperbolic-tangent
	activation, neural tangent kernel parametrization \eqref{eq:reseau}, three
	output heads on a shared trunk, and boundary conditions imposed by
	multiplicative lifting. The empirical tangent kernel and its derivatives are
	evaluated analytically, then the empirical Gram matrix $G_M$ is assembled by
	quadrature.
	
	Figure~\ref{fig:largeur} shows, left, the concentration of the kernel: the
	slices $\Theta_{11}(\cdot,0.5)$ from ten initializations, strongly dispersed
	for $M=50$, converge toward the reference curve obtained at $M=3\times10^4$ as
	soon as $M=5000$. Right, $\lmin(G_M)$ converges toward its infinite-width
	limit, while the spurious correlation between the convective and diffusive
	heads,
	\[
	\rho_{\mathrm{eff}}(M)=
	\dfrac{\norm{\Theta_{CD}}_F}
	{\bigl(\norm{\Theta_{CC}}_F\norm{\Theta_{DD}}_F\bigr)^{1/2}},
	\]
	decreases like $M^{-1/2}$, going from $0.186$ for $M=25$ to $0.0149$ for
	$M=3200$. Thus, the correlation appearing in Corollary~\ref{cor:rho} is
	interpreted as a finite-width effect, whose contribution to the spectral loss
	is of order $O(1/M)$ for a shared trunk.
	
	\begin{figure}[H]
		\centering
		\includegraphics[width=\linewidth]{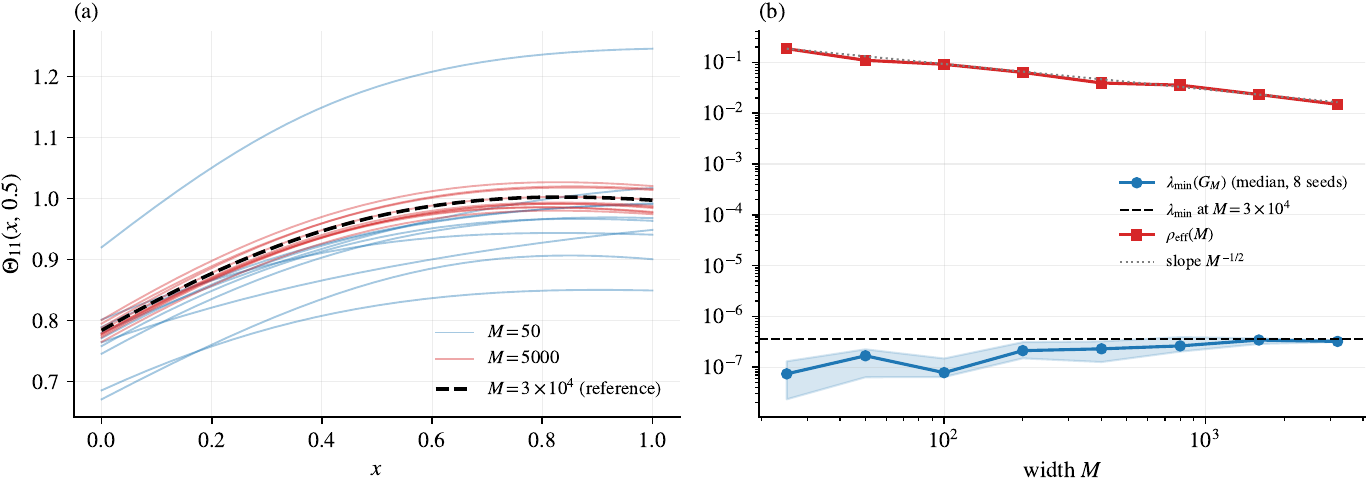}
		\caption{Finite-width networks with three outputs. Left: slices
			$\Theta_{11}(\cdot,0.5)$ of the empirical tangent kernel, ten
			initializations for $M=50$ and $M=5000$, reference $M=3\times 10^{4}$
			(dashes). Right: $\lmin(G_M)$ (median, eight initializations, interquartile
			band) and spurious correlation $\rho_{\mathrm{eff}}(M)$, of slope $M^{-1/2}$
			(dots).}
		\label{fig:largeur}
	\end{figure}
	
	Figure~\ref{fig:entrainement} presents the full training by gradient descent,
	with a step $\eta=\dfrac{0.9}{\lmax}$ estimated by power iteration, of a
	network of width $M=200$ with three outputs on the variational loss of the
	system \eqref{eq:sys3} with a manufactured right-hand side, for
	$\Pe\in\{150,500,1500\}$. Without weighting, the curves of
	$\dfrac{\norm{r_D}}{\norm{r_D^{0}}}$ separate according to $\Pe$ and exhibit a
	plateau, with respective final residuals $0.010$, $0.017$, and $0.106$. With
	selective weighting, the three curves superimpose and converge toward a residual
	independent of $\Pe$, with a number of iterations reduced by about a factor of
	two. The kernel being free to evolve and the loss being non-quadratic in
	$\theta$, this agreement with the linearized predictions confirms the relevance
	of the analysis beyond the idealized setting.
	
	\begin{figure}[H]
		\centering
		\includegraphics[width=\linewidth]{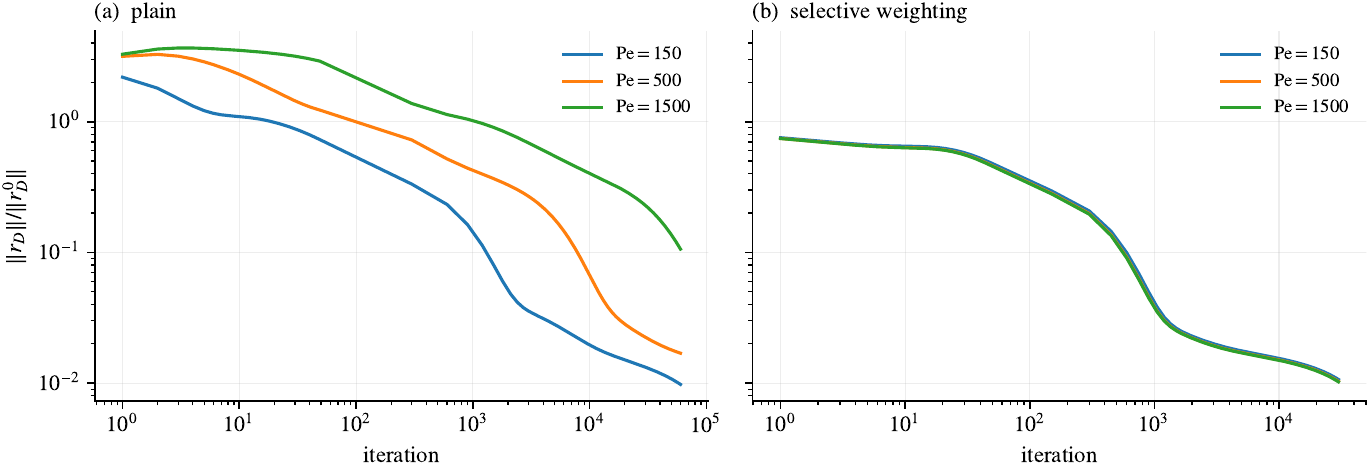}
		\caption{Full training of a network of width $M=200$ with three outputs,
			system \eqref{eq:sys3}. Left: without weighting, with stratification and
			capping by $\Pe$. Right: with selective weighting, with superposition of the
			curves and convergence independent of $\Pe$.}
		\label{fig:entrainement}
	\end{figure}
	
	\subsection{Adam optimizer and architecture}
	\label{ssec:num-adam}
	
	We finally compare the training by Adam of two architectures of identical total
	width: a shared trunk with three heads and three disjoint subnetworks. The
	learning rate is optimized separately for each architecture, and we measure, over
	five initializations, the median number of iterations needed to reach
	$\dfrac{\norm{r_D}}{\norm{r_D^{0}}}<10^{-2}$.
	
	Table~\ref{tab:adam} and Figure~\ref{fig:adam} highlight the mechanism predicted
	in Section~\ref{sec:adam}. Below the P\'eclet threshold, the two architectures
	exhibit comparable performance. Beyond it, the disjoint subnetworks become more
	efficient, with a gain of a factor of $4$ to $5$, which increases with the
	P\'eclet number: the cost of the shared trunk is multiplied by $4.4$ between
	$\Pe=50$ and $\Pe=5000$, against only $1.6$ for the disjoint subnetworks. The
	separation also reduces the variability between initializations: at $\Pe=5000$,
	some realizations of the shared trunk reach the iteration budget limit, while the
	disjoint subnetworks remain within a tighter interval.
	
	These results suggest that the scaling induced by Adam behaves like a per-block
	normalization when the parameters are effectively separated by component.
	
	\begin{table}[H]
		\centering
		\caption{Median Adam iterations (five initializations, optimized learning
			rate) to reach $\norm{r_D}/\norm{r_D^{0}}<10^{-2}$, system \eqref{eq:sys3}:
			shared trunk and disjoint subnetworks, at equal total width.}
		\label{tab:adam}
		\begin{tabular}{rrrr}
			\toprule
			$\Pe$ & shared trunk & disjoint subnetworks & ratio\\
			\midrule
			$50$ & $838$ & $584$ & $1.4$\\
			$150$ & $791$ & $697$ & $1.1$\\
			$500$ & $2\,349$ & $484$ & $4.9$\\
			$1500$ & $2\,335$ & $526$ & $4.4$\\
			$5000$ & $3\,718$ & $940$ & $4.0$\\
			\bottomrule
		\end{tabular}
	\end{table}
	
	\begin{figure}[H]
		\centering
		\includegraphics[width=0.62\linewidth]{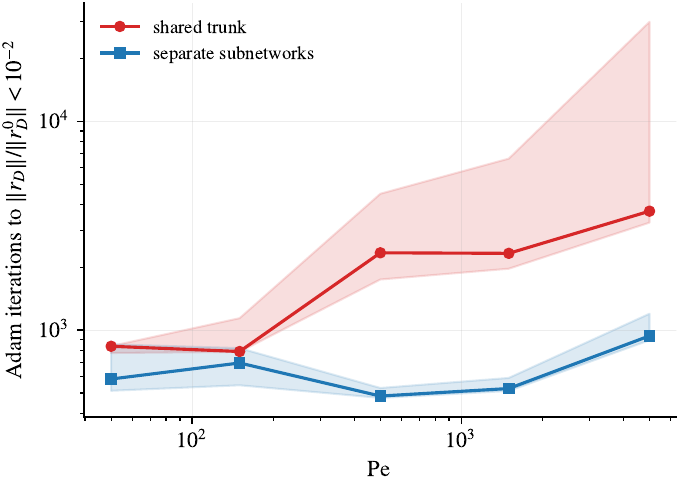}
		\caption{Training by Adam: iterations to reach
			$\norm{r_D}/\norm{r_D^{0}}<10^{-2}$ as a function of $\Pe$, median over five
			initializations, interquartile band. The shared trunk undergoes the growth in
			$\Pe$; the disjoint subnetworks remain almost stationary beyond the
			threshold.}
		\label{fig:adam}
	\end{figure}
	
	\section{Conclusion}
	\label{sec:conclusion}
	
	We have established that the training dynamics of a variational physics-informed
	network, applied to a coupled and heterogeneous parabolic
	convection--diffusion--reaction system, is governed, in the neural tangent kernel
	regime, by a Gram matrix whose dominant-convection asymptotics obeys the double
	screening theorem:
	\[
	\Schur{\Gh}{\Gh_{CC}}
	\longrightarrow
	S_{DD}(\Schur{\Kh}{\Kh_{CC}})S_{DD}\he .
	\]
	The central factor quantifies the degradation of the convergence rate induced by
	the canonical correlations of the kernel, while the extreme factors express the
	neutralization of the inter-equation coupling, described by an exact identity.
	The temporal frequencies, for their part, escape this screening. There follows a
	spectral diagnosis: the growth of the condition number in $\Pe^2$ comes from the
	choice of a shared optimizer step and not from the variational approximation
	itself. This diagnosis suggests a structural remedy: the second screening leads
	naturally to an architecture with disjoint subnetworks, whose training properties
	are confirmed by the full numerical experiments.
	
	Three extensions emerge. The first concerns the study of the evolution of the
	kernel during training, outside the tangent kernel regime, by perturbation of the
	linear system \eqref{eq:flot-lin}. The second concerns the analysis of
	preconditioned optimizers, for which the framework developed here provides a
	reference spectral object. The third aims at the extension to exchanger-type
	nonlinearities, by freezing the coefficients along the training trajectories.

	\section*{Data availability}
	The scripts reproducing all the figures and tables are freely available in a
	public repository \cite{zenodo}.
	
	\appendix
	
	\renewcommand{\appendixname}{Appendix }
	
	\section{Completion of the proof of Theorem~\ref{thm:main}}
	\label{app:preuve-main}
	
	We complete Step~4 of the proof of Theorem~\ref{thm:main}, by showing that the
	perturbation $\delta$ of \eqref{eq:yD-pert} affects the value of the minimum
	only at order $O(b^{-1})$. Denote by $F(y_C)$ the form
	$\bigl(y_C,\sigma_{DD}x_D+\delta(y_C,x_D)\bigr)\he\Kh(\cdot)$ and by $F_0(y_C)$
	the unperturbed form obtained for $\delta\equiv 0$, and fix $\norm{x_D}_2=1$.
	
	\emph{Upper bound.} Evaluating $F$ at the unperturbed minimizer
	$y_C^{\star}=-\Kh_{CC}^{-1}\Kh_{CD}\sigma_{DD}x_D$,
	\begin{equation*}
		\min_{y_C}F\le F(y_C^{\star})
		=F_0(y_C^{\star})
		+2\Real\ip{\Kh(y_C^{\star},\sigma_{DD}x_D)}{(0,\delta^{\star})}
		+\delta^{\star\mathsf{H}}\Kh_{DD}\delta^{\star},
	\end{equation*}
	where $\delta^{\star}=\delta(y_C^{\star},x_D)$ satisfies, by \eqref{eq:yD-pert}
	and $\norm{y_C^{\star}}_2\le c_2\norm{x_D}_2$,
	\begin{equation*}
		\norm{\delta^{\star}}_2\le\dfrac{c_1(1+c_2)}{b\xi-\norm{\sigma_{CC}}_2}=O(b^{-1}).
	\end{equation*}
	The two correction terms are of order $O(b^{-1})$ and $O(b^{-2})$, hence
	$\min F\le\min F_0+O(b^{-1})$.
	
	\emph{Lower bound.} Let $y_C^{b}$ be a minimizer of $F$. Since
	$F(y_C^{b})\le F(y_C^{\star})$ is bounded uniformly in $b$ and
	$F(y_C)\ge\lmin(\Kh)\norm{y_C}_2^{2}$, the sequence $\norm{y_C^{b}}_2$ is
	bounded uniformly in $b$. Then $\norm{\delta(y_C^{b},x_D)}_2=O(b^{-1})$
	uniformly, and the same expansion, applied in reverse, gives
	\begin{equation*}
		F(y_C^{b})\ge F_0(y_C^{b})-O(b^{-1})\ge\min_{y_C}F_0-O(b^{-1}).
	\end{equation*}
	Combining the two bounds, $\min F=\min F_0+O(b^{-1})$, uniformly on the sphere
	$\norm{x_D}_2=1$, which concludes Step~4. \qed

\end{document}